\documentclass[a4paper,11pt]{amsart}
\usepackage{amsfonts}
\usepackage{amssymb}
\usepackage[utf8]{inputenc}
\usepackage{amsmath}
\usepackage{pdflscape}
\usepackage{float}
\usepackage{easyReview}
\usepackage{graphicx}
\usepackage[colorlinks=true, linkcolor=blue, citecolor=red]{hyperref}

\newtheorem{theorem}{Theorem}[section]
\newtheorem{lemma}[theorem]{Lemma}
\newtheorem{proposition}[theorem]{Proposition}
\newtheorem{corollary}[theorem]{Corollary}
\theoremstyle{remark}
\newtheorem{remark}[theorem]{Remark}
\numberwithin{equation}{section}
\newcommand{\T}{\mathbb T}
\newcommand{\R}{\mathbb R}
\newcommand{\C}{\mathbb C}
\newcommand{\Z}{\mathbb Z}
\newcommand{\supp}{\operatorname{supp}}
\newcommand{\Ran}{\operatorname{Ran}}

\numberwithin{equation}{section}
\makeatletter
\@namedef{subjclassname@2010}{\textup{2020} Mathematics Subject Classification}
\makeatother

\begin{document}
	
	\pagenumbering{arabic}	
\title[UCP for the periodic BBM equation]{Unique continuation for the periodic Benjamin--Bona--Mahony equation}
\author[Capistrano-Filho]{Roberto de A. Capistrano-Filho*}
	\address{Departamento de Matem\'atica,  Universidade Federal de Pernambuco (UFPE), 50740-545, Recife (PE), Brazil.}
	\email{roberto.capistranofilho@ufpe.br}
	
	\author[Gonzalez Martinez]{Victor Hugo Gonzalez Martinez}
	\address{Departamento de Matem\'atica,  Universidade Federal de Pernambuco (UFPE), 50740-545, Recife (PE), Brazil.}
		\email{victor.martinez@ufpe.br}
	\address{Laboratoire Analyse, Géométrie et Applications, CNRS UMR 7539, Université Sorbonne Paris-Nord, F-93430 Villetaneuse, France.}
\email{gonzalezmartinez@math.univ-paris13.fr}
	
	\author[Nascimento]{Ailton Campos do Nascimento}
	\address{Departamento de Matem\'atica, Universidade Federal do Piau\'i (UFPI),64049-550, Teresina (PI), Brazil.}
	\email{ailton.nascimento@ufpi.edu.br}

\date{\today}
		\thanks{*Corresponding author: \url{roberto.capistranofilho@ufpe.br}}
		\thanks{\textbf{Funding.} Capistrano–Filho was partially supported by the CAPES-COFECUB program grant number 88887.879175/2023-00, CNPq grant numbers 301744/2025-4 and 421573/2023-6, and PROPG (UFPE). V. H. Gonzalez Martinez was supported by CAPES/COFECUB grant 88887.879175/2023-00 and CNPq grants 421573/2023-6, 402652/2024-0 and 300997/2025-6.}
		\subjclass[2010]{ 35B60 (primary); 35Q53, 93B05, 93B07, 93D15, 93D20 (secondary)}
\keywords{Benjamin--Bona--Mahony equation; unique continuation; weighted Hamiltonian; localized damping; control.}

\begin{abstract}
We prove unique continuation for the real periodic Benjamin--Bona--Mahony
equation in the energy space: a solution that equals a constant on a nonempty
space--time strip is that constant everywhere. In particular, Rosier's unique
continuation conjecture \cite{Rosier2016} holds without any smallness or mean
assumption. It even suffices to observe, at one point $x_0$ and for $t$ in an
interval, the solution and the potential $\psi=(1-\partial_x^2)^{-1}(u+u^2/2)$
of its flux: $u(x_0,t)=C$ and $\psi(x_0,t)=C+C^2/2$ imply $u\equiv C$, and the
second condition cannot be dropped when $C=-1$. The proof uses an exponentially
weighted Hamiltonian of indefinite sign with a coercive dissipation identity,
and it applies to every nonconstant real polynomial flux. Consequently,
Rosier's stabilization results for two localized feedback laws and for a
boundary feedback hold unconditionally, for arbitrary data. Stationary step
profiles show that the theorem fails in $H^s$ for $0\leq s<1/2$. We also give a
generating-series proof of single-point unique continuation for the linearized
equation and discuss related control results.
\end{abstract}

\maketitle
\section{Introduction}
On $\T=\R/(2\pi\Z)$ we study the real Benjamin--Bona--Mahony (BBM)
equation, introduced by Peregrine \cite{Peregrine} and by Benjamin, Bona and
Mahony \cite{BBM} as a regularized model for unidirectional long waves,
\begin{equation}\label{eq:bbm}
 (1-\partial_x^2)u_t+\partial_x\left(u+\frac{u^2}{2}\right)=0.
\end{equation}
For initial data in $H^1(\T)$ it has a global, two-sided flow and conserves
$$\|u\|_{H^1}^2=\int_\T(u^2+u_x^2)\,dx.$$ On the line, the Cauchy problem is
globally well posed in $H^s(\R)$ exactly for $s\geq0$ \cite{BonaTzvetkov}, and on
the torus it is globally well posed in $H^s(\T)$, $s\geq0$
\cite{Roumegoux,RosierZhang}. Although its solutions are analytic in time,
they need not be analytic in space. In particular, a local space--time
observation does not lead to unique continuation by a direct analytic
identity theorem in both variables.

Unique continuation from an open set is a basic tool in the control theory
of nonlinear dispersive equations. Through the compactness--uniqueness
method, stabilization under localized damping and global control results
are reduced to showing that a solution which vanishes, or is constant, on a
space--time strip is trivial. For the Korteweg--de Vries (KdV) equation,
this property was obtained by Saut and Scheurer \cite{SautScheurer} with
Carleman estimates and by Zhang \cite{Zhang92} through inverse scattering.
Related uniqueness results from the support of solutions are due to
Bourgain \cite{Bourgain} and to Kenig, Ponce and Vega \cite{KPV02,KPV03}. On
periodic domains, continuation is combined with propagation of regularity
and smoothing effects in the control and stabilization theory of KdV
\cite{RussellZhang,LaurentRosierZhang}, the Benjamin--Ono equation
\cite{LaurentLinaresRosier}, and higher-order or coupled dispersive models
\cite{CapistranoKwakVielma,CapistranoGomes}. For the nonlinear Schr\"odinger
equation, see \cite{DehmanGerardLebeau}. The BBM equation differs in a
structural way. Its dispersion relation $\xi/(1+\xi^2)$ is bounded, and the
free group $S(t)$ of \eqref{eq:linbbm} below is unitary on every $H^s(\T)$
and satisfies $S(t)f-f\in H^{s+1}(\T)$ for $f\in H^s(\T)$. Singularities
therefore neither travel nor disappear: at high frequencies, no information
moves from an observation region to the rest of the torus. Smoothing and
propagation-of-regularity arguments are therefore unavailable. The same
feature makes the spectrum of the linearized operator accumulate at $0$.

For control of BBM-type equations, the accumulation of the spectrum was
identified by Micu \cite{Micu} as an obstruction to exact controllability.
Zhang and Zuazua \cite{ZhangZuazua} studied unique continuation for the
linearized equation with a space-dependent potential (see also
\cite{Yamamoto}). On the torus, Rosier
and Zhang \cite{RosierZhang} overcame the obstruction with moving controls.
They obtained local and large-time global exact controllability, and
semiglobal exponential stabilization. Moving controls have since been used
for the regularized Benjamin--Ono equation \cite{MontesCordoba}. For the
$abcd$ family of Boussinesq systems on a periodic domain, see
\cite{MORZ}. Rosier \cite{Rosier2016} studied the BBM equation with
localized damping and reduced its asymptotic stability to a continuation
property discussed below. Unique continuation for the KP--BBM-II equation is
proved in \cite{Mammeri}. Continuation and decay under localized damping for
other BBM-type models appear in \cite{PazotoSotoVieira}.

Rosier and Zhang proved in \cite[Theorem~3.1]{RosierZhang} that an $H^1$
solution vanishing on a nonempty strip is zero if its initial mean is
nonnegative and $\|u_0\|_{L^\infty}<3$. For the generalized equations
$$u_t-u_{txx}+f(u)_x=0$$ without drift term, where $f\in C^1(\R)$ is
nonnegative and $0$ is its only zero near the origin, they established
continuation for arbitrary $H^1$ data \cite[Theorem~4.1]{RosierZhang}. The latter mechanism also applies to
the special constant level $-1$ of \eqref{eq:bbm}, since
$u+u^2/2=(u+1)^2/2-1/2$, the function $u+1$ solves an equation with the
one-signed flux $v^2/2$. A related one-sign flux argument, for $H^4$ data
and also on the line, is given in \cite{daSilvaFreire}, which also shows
that such equations have no nontrivial compactly supported solutions. These restrictions leave the usual BBM
equation at the zero level unresolved. In his study of localized damping,
Rosier \cite{Rosier2016} explicitly posed the following problem.

\medskip\noindent\textbf{Rosier's UCP conjecture \cite{Rosier2016}.}
\emph{There exists $\delta>0$ such that, for every
$v_0\in H^1(\T)$ with $\|v_0\|_{H^1}<\delta$, if the solution of
$$
\begin{cases}v_t-v_{xxt}+v_x+vv_x=0,\\
v(0)=v_0,
\end{cases}
$$ satisfies $v=0$ on
$\omega\times(0,T)$ for a nonempty open $\omega\subset\T$ and some
$T>0$, then $v_0=0$ (and hence $v\equiv0$).}
\medskip

The conjecture was still described as open in
\cite[Section~1]{SierraFonsecaPazoto}, where the approach of \cite{Rosier2016}
is adapted to a linear system coupling two higher-order BBM-type equations
(see also \cite{BautistaPazoto,MicuPazoto} for related Boussinesq systems).
The conjecture imposes no sign on the mean. Our result proves its conclusion
for all real $H^1$ data and, more generally, for observation at any constant
level.

\begin{theorem}\label{thm:main}
Let $u\in C(\R;H^1(\T;\R))$ be a solution of \eqref{eq:bbm}. If $u=C$ on $\omega\times(0,T)$ for some $C\in\R$, some nonempty open $\omega\subset\T$, and some $T>0$, then $u\equiv C$ on $\T\times\R$.
\end{theorem}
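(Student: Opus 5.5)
The plan is to reduce the strip condition to a two-condition observation at a single point, and then run a weighted-energy argument on the torus cut open at that point. Throughout, set $F(u)=u+u^2/2$ and $\psi=(1-\partial_x^2)^{-1}F(u)$, so that \eqref{eq:bbm} reads $u_t+\psi_x=0$.

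\emph{Step 1: reduction to point data.} On $\omega\times(0,T)$ we have $u=C$, so $u_t=0$ and hence $\psi_x=0$ there. Thus $\psi_{xx}=0$ on $\omega$, and the elliptic relation $\psi-\psi_{xx}=F(u)$ gives $\psi=F(C)$ on $\omega\times(0,T)$. Fix $x_0\in\omega$. For $t\in(0,T)$ we then have
\[
u(x_0,t)=C,\qquad \psi(x_0,t)=F(C),\qquad \psi_x(x_0,t)=0 .
\]
Next I would use that the BBM flow is analytic in time with values in $H^1(\T)$, as recalled in the introduction. Then $t\mapsto u(x_0,t)$, $t\mapsto\psi(x_0,t)$ and $t\mapsto\psi_x(x_0,t)$ are real analytic, since $\psi(t)\in H^3(\T)$ and point evaluation of $\psi$ and $\psi_x$ is continuous there. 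So these three identities extend to all $t\in\R$.

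\emph{Step 2: shifted variables on the cut interval.} Put $v=u-C$ and $\varphi=\psi-F(C)$. These satisfy
\[
v_t+\varphi_x=0,\qquad \varphi-\varphi_{xx}=G(v):=(1+C)v+\tfrac{v^2}{2}.
\]
View them on $I=(x_0,x_0+2\pi)$ with $v=\varphi=\varphi_x=0$ at both endpoints for all $t$. Let $\Phi$ be the primitive of $G$ with $\Phi(0)=0$, and take a weight $w=e^{\lambda x}$ on $I$. Integrating by parts gives the basic identity
\[
\frac{d}{dt}\int_I w\,\Phi(v)\,dx=-\int_I wG(v)\varphi_x\,dx=-\int_I w(\varphi-\varphi_{xx})\varphi_x\,dx=\frac12\int_I w'(\varphi^2-\varphi_x^2)\,dx .
\]
All boundary terms vanish because of the point data; this is exactly where the condition on $\psi$, not only on $u$, is used. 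The right-hand side has no sign, so I would modify the functional. Concretely, I would add to $\int w\Phi(v)$ weighted quadratic terms in $\varphi$ and $\varphi_x$, built from $\varphi_t-\varphi_{xxt}=-G'(v)\varphi_x$. I would then tune $\lambda$ and the coefficients so that the weighted Hamiltonian $H(t)$, which may be indefinite, satisfies
\[
H'(t)\ \ge\ c\int_I e^{\lambda x}(\varphi^2+\varphi_x^2)\,dx ,
\]
using the exponential weight to absorb the cross terms. The fact that $e^{\pm x}$ solve the homogeneous equation $\varphi-\varphi_{xx}=0$ suggests choosing $\lambda$ close to $\pm1$ and exploiting the Green's function structure on $I$.

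\emph{Step 3: conclusion.} $H(t)$ is bounded by a function of $\|u(t)\|_{H^1}$, which is conserved. Hence integrating the inequality over $\R$ gives $\int_\R\int_I e^{\lambda x}(\varphi^2+\varphi_x^2)\,dx\,dt<\infty$. To upgrade this to $\varphi\equiv0$, I would use the symmetry $(x,t)\mapsto(-x,-t)$ of \eqref{eq:bbm}, which preserves the point data and should reverse the sign of the dissipation in the reflected weight. This forces the dissipation to vanish identically. Alternatively, I would use time-analyticity together with the recurrence of the bounded, conservative flow to exclude a strictly monotone $H$. Once $\varphi\equiv0$, we get $G(v)=0$ and $v_t=0$. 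So $v$ takes values in $\{0,-2(1+C)\}$, and since $v\in H^1$ is continuous with $v(x_0)=0$, it follows that $v\equiv0$, that is $u\equiv C$.

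\emph{Main obstacle.} The hard step is Step 2: producing the coercive dissipation identity with a genuine sign for every level $C$, in particular without a sign on $1+C$, so that the argument is not restricted to the one-signed flux case. I would first try the ansatz $H=\int_I e^{\lambda x}\bigl(\Phi(v)+a\varphi^2+b\varphi_x^2+c\,\varphi\varphi_x\bigr)\,dx$ and compute $H'$ exactly, aiming to reach a positive definite quadratic form in $(\varphi,\varphi_x)$ after integration by parts.
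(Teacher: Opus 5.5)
Your Step~1 and the weighted identity you derive at the start of Step~2 are correct. Together they form the paper's fundamental-domain route: Lemmas~\ref{lem:strip}, \ref{lem:allt} and Proposition~\ref{prop:flux}, with the flux through the cut removed by the point data. This is the paper's second proof, via Theorem~\ref{thm:point}.

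\textbf{Gap in Step~2.} The proof stalls at its core. You say that $\frac\lambda2\int_I e^{\lambda x}(\varphi^2-\varphi_x^2)\,dx$ has no sign and replace it by an ansatz with unspecified coefficients. So the coercive estimate that everything rests on is never produced. The missing idea is that no modification is needed. Conjugate by the weight, $W=e^{\lambda x/2}\varphi$. Then $e^{\lambda x/2}\varphi_x=W_x-\frac\lambda2W$, and $\int_IWW_x\,dx=\frac12[W^2]_{\partial I}=0$ because $\varphi(x_0,t)=0$ and $\varphi$ is periodic. Hence
\[
\frac{d}{dt}\int_I e^{\lambda x}\Phi(v)\,dx=-\frac\lambda2\int_I\Big(W_x^2+\Big(\frac{\lambda^2}{4}-1\Big)W^2\Big)\,dx .
\]
For $|\lambda|>2$ this has a definite sign and is comparable to $\int_Ie^{\lambda x}(\varphi^2+\varphi_x^2)\,dx$; the paper takes $\lambda=3$. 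Your suggestion to take $\lambda$ near $\pm1$ points the wrong way. There the form is $\|W_x\|^2-\frac34\|W\|^2$, which takes both signs on an interval of length $2\pi$, since the Dirichlet Poincar\'e constant there is only $\frac14$.

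\textbf{Gap in Step~3.} Even granting the dissipation inequality, the argument does not close. The reflection $u(x,t)\mapsto u(2x_0-x,-t)$ is a symmetry, but it only turns the functional with weight $e^{\lambda x}$ into the one with weight $e^{-\lambda x}$ run backwards in time. The same identity, with $-\lambda$, already gives that functional's monotonicity, so the two statements are consistent and force nothing. A bounded monotone function with integrable derivative need not be constant. No recurrence theorem is available for an individual $H^1$ trajectory of this infinite-dimensional flow.

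What is needed is the two-sided compactness argument:
\begin{enumerate}
\item From $\int_\R\|\varphi(t)\|_2^2\,dt<\infty$, choose $t_n^\pm\to\pm\infty$ with $\|\varphi(t_n^\pm)\|_2\to0$.
\item The conserved $H^1$ bound and Arzel\`a--Ascoli give uniform subsequential limits $v_*$ of $v(t_n^\pm)$ with $(1-\partial_x^2)^{-1}G(v_*)=0$, hence $G(v_*)=0$.
\item The continuous $v_*$ is therefore a constant in $\{0,-2(1+C)\}$, and $v_*(x_0)=0$ forces $v_*=0$.
\item So the weighted Hamiltonian tends to $0$ along both $t_n^+$ and $t_n^-$. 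Monotonicity squeezes it to be identically $0$, its derivative vanishes, and $\varphi\equiv0$.
\end{enumerate}
Both time directions are essential here precisely because $\Phi$ has no sign.
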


In particular, Rosier's conjecture holds with no restriction on $\delta$. Neither the size of the datum nor the sign of its mean enters the conclusion. To our knowledge, $C=-1$ was the only level previously covered without restrictions on the data. For every other level, including the level $C=0$ of Rosier's conjecture, the flux $F_\beta$ introduced below changes sign, and arguments based on its positivity do not apply. The constant-value formulation will also identify the limiting states for derivative damping.

The observation in Theorem~\ref{thm:main} can be reduced to two scalar functions of time at a single point. For a solution $u$ of \eqref{eq:bbm} we write
\begin{equation}\label{eq:defpsi}
\begin{cases}
 q&=u+\frac{u^2}{2},\\ 
 \psi&=(1-\partial_x^2)^{-1}q,
 \end{cases}
\end{equation}
so that \eqref{eq:bbm} reads $u_t=-\psi_x$. Explicitly, $$\psi(x,t)=\int_\T G(x-y)\,q(y,t)\,dy,$$ where $$G(x)=\frac{\cosh(\pi-|x|)}{2\sinh\pi}$$ for $|x|\leq\pi$. Since $$\frac{d}{dt}\int_{x_0}^{x}u(y,t)\,dy=\psi(x_0,t)-\psi(x,t),$$ the function $\psi(x_0,\cdot)$ measures the flux of mass through $x_0$, so it is not determined by the values of $u$ near $x_0$.

\begin{theorem}\label{thm:point}
Let $u\in C(\R;H^1(\T;\R))$ be a solution of \eqref{eq:bbm}, and let $\psi$ be given by \eqref{eq:defpsi}. If, for some $x_0\in\T$, $C\in\R$ and $T>0$,
\begin{equation}\label{eq:pointobs}
 u(x_0,t)=C\qquad\text{and}\qquad \psi(x_0,t)=C+\frac{C^2}{2}\qquad\text{for all }t\in(0,T),
\end{equation}
then $u\equiv C$ on $\T\times\R$.
\end{theorem}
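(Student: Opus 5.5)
Throughout, set $v=u-C$ and $\beta=1+C$. Since $u+u^2/2=C+C^2/2+\beta v+v^2/2$, the function $v$ solves
$(1-\partial_x^2)v_t+\partial_x F(v)=0$ with $F(v)=\beta v+v^2/2$. Put $\Psi=(1-\partial_x^2)^{-1}F(v)=\psi-(C+C^2/2)$, so that $v_t=-\Psi_x$. The hypothesis \eqref{eq:pointobs} becomes $v(x_0,t)=0$ and $\Psi(x_0,t)=0$ for $t\in(0,T)$.

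\emph{Step 1: extension to all times.} I would first use the time analyticity of $H^1$ solutions, mentioned in the introduction, to extend both scalar identities to all $t\in\R$. Both $t\mapsto v(x_0,t)$ and $t\mapsto\Psi(x_0,t)$ are analytic, since point evaluation is continuous on $H^1$ and on $H^3$. Differentiating $v(x_0,t)=0$ in $t$ then gives $\Psi_x(x_0,t)=-v_t(x_0,t)=0$ for all $t$. This is legitimate because $\Psi(t)\in H^3(\T)$, so $\Psi_x$ is continuous in $x$.

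\emph{Step 2: weighted Hamiltonian and coercive dissipation.} Cut the torus at $x_0$ and identify it with $(0,2\pi)$, with $x_0$ at both endpoints. For $\lambda>0$ set $w=e^{\lambda x}$, let $G(v)=\beta v^2/2+v^3/6$ be a primitive of $F$, and define
\[
E(t)=\int_0^{2\pi}w\,G(v)\,dx ,
\]
which has indefinite sign. Using $v_t=-\Psi_x$ and $F=\Psi-\Psi_{xx}$, two integrations by parts give
\[
\frac{dE}{dt}=-\frac12\int_0^{2\pi}w'(\Psi_x^2-\Psi^2)\,dx+\Bigl[\tfrac12 w\Psi_x^2-\tfrac12 w\Psi^2\Bigr]_0^{2\pi}.
\]
The boundary terms vanish by Step 1, since $\Psi=\Psi_x=0$ at $x_0$. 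Now substitute $\varphi=e^{\lambda x/2}\Psi$, which satisfies $\varphi(0)=\varphi(2\pi)=0$. The cross term integrates to zero, so
\[
\int_0^{2\pi} w'\Psi_x^2=\lambda\int_0^{2\pi}\Bigl(\varphi_x^2+\tfrac{\lambda^2}{4}\varphi^2\Bigr),\qquad \int_0^{2\pi} w'\Psi^2=\lambda\int_0^{2\pi}\varphi^2 .
\]
The Dirichlet Poincar\'e inequality on $(0,2\pi)$ gives $\int\varphi_x^2\ge\frac14\int\varphi^2$. Hence, for any fixed $\lambda>\sqrt3$,
\[
\frac{dE}{dt}=-D(t)\le-\frac{\lambda}{2}\Bigl(\frac{\lambda^2}{4}-\frac34\Bigr)\int_0^{2\pi}e^{\lambda x}\Psi^2\,dx\le0\quad\text{for all }t\in\R .
\]
The constant $C$ plays no role here, which is why the argument covers every level.

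\emph{Step 3: LaSalle-type conclusion.} The $H^1$ norm is conserved and $H^1\hookrightarrow C(\T)$, so $E$ is bounded on $\R$. Hence $\int_\R D\,dt<\infty$, and $E$ has limits $E_\pm$ as $t\to\pm\infty$. Take any sequence $t_n\to\pm\infty$. A subsequence of $v(\cdot+t_n)$ converges weakly in $H^1$, uniformly on compact time intervals by weak continuity of the BBM flow and compactness of $H^1\hookrightarrow C$, to a solution $\bar v$. This limit satisfies the same point constraints and $D(\bar v)\equiv0$. Then $\bar\Psi\equiv0$, so $F(\bar v)\equiv0$ and $\bar v$ takes values in $\{0,-2\beta\}$. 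By continuity and $\bar v(x_0)=0$ we get $\bar v\equiv0$. Since $E$ depends on $v$ through a cubic integral, it is continuous under uniform convergence, so $E_+=E_-=0$. Monotonicity then forces $E\equiv0$, hence $D\equiv0$, $\Psi\equiv0$, and $F(v)\equiv0$ for all $t$. The same argument as for $\bar v$ now gives $v\equiv0$, that is, $u\equiv C$.

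The main obstacle is Step 2: finding a functional whose time derivative is sign-definite even though the Hamiltonian density $G$ itself is indefinite. The exponential weight is what makes this work, since the $\lambda^2/4$ gain beats the $-\Psi^2$ term. The two point conditions are used exactly to kill the boundary terms. The condition $u(x_0,t)=C$ is used through its time derivative $\Psi_x(x_0,t)=0$, and the condition on $\psi$ gives $\Psi(x_0,t)=0$.

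A secondary technical point is Step 3. It needs weak sequential continuity of the $H^1$ flow and the passage of the point constraints to the limit; both are standard for BBM because $v_t=-\Psi_x$ is smoothing by one derivative. If time analyticity were unavailable, one would instead have to run the argument on $(0,T)$ only, and the limiting step would need replacing.
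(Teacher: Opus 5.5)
Your proposal is correct and follows essentially the paper's proof. Like you, the paper extends both point conditions to all $t$ by time analyticity and gets $\Psi_x(x_0,t)=0$ by differentiating $v(x_0,t)=0$; it then uses the weight $e^{\alpha x}$ on a fundamental domain cut at $x_0$, so that the flux terms vanish, and concludes from monotonicity plus zero limits of the weighted Hamiltonian at both ends of time. The differences are minor: your Dirichlet Poincar\'e step lowers the threshold to $\lambda>\sqrt3$, whereas the paper takes $\alpha=3>2$ and needs no boundary condition for coercivity, and in Step 3 the paper avoids trajectory compactness and weak continuity of the flow by choosing times $t_n^\pm\to\pm\infty$ with $\|\Psi(t_n^\pm)\|_{L^2}\to0$, available directly from $\int_\R D\,dt<\infty$, and using compactness of $\{v(t)\}$ in $C(\T)$ only at those single times.
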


Here $u(\cdot,t)$ and $\psi(\cdot,t)$ are the continuous representatives. A solution which equals $C$ on $\omega\times(0,T)$ satisfies \eqref{eq:pointobs} at every point of $\omega$ (Lemma~\ref{lem:strip}), so Theorem~\ref{thm:point} contains Theorem~\ref{thm:main}. The second condition in \eqref{eq:pointobs} cannot be dropped in general: the equation is invariant under $$u(x,t)\mapsto-2-u(2x_0-x,t),$$ so that every solution with $1+u_0$ odd with respect to $x_0$ and not identically zero satisfies $u(x_0,t)=-1$ for all $t$ without being constant (Remark~\ref{rem:level}). The first condition cannot be dropped either when $C\neq-1$: the constant solution $u\equiv-2-C$ satisfies $\psi\equiv C+C^2/2$. At the level $C=-1$, the second condition alone suffices (Remark~\ref{rem:level}). For the linearized equation, by contrast, the observation of the solution alone at one point suffices (Proposition~\ref{thm:lin}).

Two further consequences of the method are worth recording. First, Proposition~\ref{prop:liouville} below is a Liouville theorem on the line: a real solution in $C(\R;H^1(\R))$ of the shifted equation which is supported in a fixed compact interval during a nonempty open time interval vanishes identically. This extends the compact-support statement of \cite[Theorem~1.2]{daSilvaFreire}, and the corresponding step in the proof of \cite[Theorem~4.1]{RosierZhang}, to the sign-changing fluxes $F_\beta$, $\beta\ne0$. Second, the proof uses the quadratic nonlinearity only through three properties: the vector field is analytic on $H^1$, the flux has an antiderivative, and $0$ is an isolated zero of the shifted flux. Remark~\ref{rem:general} explains why the same conclusion therefore holds for every nonconstant real polynomial flux.

The regularity issue should be distinguished from smallness. Rosier and Zhang \cite[Remark~3.2]{RosierZhang} observed that a stationary step profile of height $-2$ solves \eqref{eq:bbm} in $C(\R;L^2(\T))$, which shows that their Theorem~3.1 fails without the assumptions $u_0\in H^1$ and nonnegative mean. The same construction works on any interval. If $E$ is a nonempty proper interval in $\T$ whose complement has interior, then
\begin{equation}\label{eq:step}
\begin{cases}
& u_E(x,t)=-2\mathbf 1_E(x),\\
 &u_E+u_E^2/2=0\quad\hbox{almost everywhere},
\end{cases}
\end{equation}
is a stationary distributional solution of the \emph{conservative} equation \eqref{eq:bbm}. It vanishes on an open strip and is nonzero. Here the flux derivative is well defined; the product $u_E(u_E)_x$ need not be interpreted separately. These profiles satisfy
\[
 \int_\T u_E\,dx=-2|E|,\qquad
 \|u_E\|_\infty=2,\qquad \|u_E\|_2=2\sqrt{|E|}.
\]
Since $u_E+u_E^2/2=0$, each $u_E$ is a fixed point of the Duhamel formulation, hence it is the solution provided by the well-posedness theory in $H^s(\T)$, $s\geq0$ \cite[Theorem~2.1]{RosierZhang}, not merely a distributional one. Their jumps exclude them from $H^1$, but $\mathbf 1_E\in H^s(\T)$ for every $s<1/2$. Moreover, the Fourier coefficients of $\mathbf 1_E$ are bounded by $\min\{|E|/(2\pi),\,1/(\pi|k|)\}$, whence $$\|u_E\|_{H^s}^2\leq C_s|E|^{1-2s}\to0,$$ as $|E|\to0$. Thus the analogue of Theorem \ref{thm:main} fails in $H^s(\T)$ for every $0\leq s<1/2$, even for data of arbitrarily small norm; we do not know whether it holds for $1/2\leq s<1$. These examples do not disprove an energy-space statement and do not exclude a small-$L^\infty$ theorem with threshold below $2$.

Theorem \ref{thm:main} concerns a time interval. Hong and Ponce \cite{HongPonce} establish continuation from certain one-time observations about the level $-1$ and construct smooth examples at other levels satisfying corresponding one-time conditions. Such observations are different from persistence on a strip. Rosier and Zhang \cite[Section~1]{RosierZhang} point out, following \cite{ZhangZuazua}, that Theorems~3.1--3.4 of \cite{DavilaMenzala}, obtained by Carleman estimates, are not correct without further assumptions; our proof does not use them. Zhang and Zuazua \cite{ZhangZuazua} develop spectral continuation methods for a linearized equation with a spatial potential and Dirichlet boundary conditions. That geometry differs from the periodic, constant-coefficient problem considered here.

The proof starts with two ideas from Rosier and Zhang
\cite{RosierZhang}: analyticity in time propagates a strip condition to
all real times, and cutting the period through the zero strip gives a
solution $z=u-C$ on $\R$ with fixed compact support. Writing
$\beta=1+C$, $F_\beta(z)=\beta z+z^2/2$ and
$w=(1-\partial_x^2)^{-1}F_\beta(z)$, the equation $z_t=-w_x$
forces $w$ to have the same fixed compact support. The conditions
$$\int_\R e^{\pm x}F_\beta(z)\,dx=0$$ follow. For the one-sign case
$\beta=0$, this already proves rigidity; when $\beta\ne0$ these moments
allow cancellation. Our new ingredient is the weighted Hamiltonian
\[
\begin{cases}
 N_\alpha(t)&= \displaystyle\int_\R e^{\alpha x}
 \left(\frac{\beta z^2}{2}+\frac{z^3}{6}\right)dx,
\\
 N_\alpha'(t)&= \displaystyle-\frac\alpha2\left(\|\partial_x(e^{\alpha x/2}w)\|_2^2+
 \left(\frac{\alpha^2}{4}-1\right)\|e^{\alpha x/2}w\|_2^2\right).
 \end{cases}
\]
For $\alpha>2$ the derivative is nonpositive even though the Hamiltonian
density changes sign. Integrating this identity in time gives $$\int_\R\|w(t)\|_2^2\,dt<\infty,$$ and compactness of the uniformly bounded, fixed-support trajectory then supplies zero limits of $N_\alpha$ along sequences in both time directions, the monotonicity forces $N_\alpha\equiv0$, which gives $w=0$ and $z=0$. The full regularity,
support, and compactness justifications are in Section~\ref{sec:weighted}.

Theorem~\ref{thm:point} is proved without passing to the line. On a fundamental domain $[-\pi,\pi]$ whose endpoints are at $x_0$, the same computation for $$\widetilde N_\alpha(t)=\int_{-\pi}^{\pi}e^{\alpha x}(\beta z^2/2+z^3/6)\,dx,$$ with $w=\psi-C-C^2/2$ and $W=e^{\alpha x/2}w$, gives
\[
 \widetilde N_\alpha'(t)=-\frac\alpha2\int_{-\pi}^{\pi}\Big(W_x^2+\Big(\frac{\alpha^2}{4}-1\Big)W^2\Big)dx
 +\sinh(\alpha\pi)\Big(w_x(\pi,t)^2+\Big(\frac{\alpha^2}{2}-1\Big)w(\pi,t)^2\Big)
\]
(Proposition~\ref{prop:flux}), where the last term is the flux through the cut, at which the weight is discontinuous as a function on $\T$. The observation \eqref{eq:pointobs} annihilates this flux, and the continuity of $H^1$ functions, together with $u(x_0,t)=C$, replaces the fixed compact support in the compactness step.

\subsection{Damping, and comparison with linear control}

We first state the nonlinear consequence which uses Theorem \ref{thm:main} directly. Fix a real, nonzero $a\in C^\infty(\T)$, and let
\begin{equation}\label{eq:nldamp}
\begin{cases}
& (1-\partial_x^2)u_t+\partial_x(u+u^2/2)+D_ju=0,\\
 &D_0u=a^2u,\\
 &D_1u=-\partial_x(a^2u_x).
 \end{cases}
\end{equation}

\begin{theorem}\label{thm:nldamp}
For $j=0,1$ and every real $u_0\in H^1(\T)$, \eqref{eq:nldamp} has a unique global solution in $C^1([0,\infty);H^1)$. As $t\to\infty$,
\[
 \begin{gathered}
 u(t)\rightharpoonup m_j\text{ in }H^1,\qquad
 \|u(t)-m_j\|_{H^s}\to0\quad(s<1),\\
 m_0=0,\qquad m_1=\frac1{2\pi}\int_\T u_0\,dx.
 \end{gathered}
\]
\end{theorem}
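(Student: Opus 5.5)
We follow the standard compactness--uniqueness (LaSalle) scheme and use Theorem~\ref{thm:main} only at the last step.

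\textbf{Well-posedness.} Write \eqref{eq:nldamp} as
\[
u_t=-(1-\partial_x^2)^{-1}\bigl[\partial_x(u+u^2/2)+D_ju\bigr].
\]
For $j=0$ the operator $(1-\partial_x^2)^{-1}(a^2\cdot)$ is bounded on $H^1$. For $j=1$ the operator $(1-\partial_x^2)^{-1}\partial_x(a^2\partial_x\cdot)$ is also bounded on $H^1$, since it loses two derivatives and gains two. The nonlinear term $(1-\partial_x^2)^{-1}\partial_x(u^2/2)$ is locally Lipschitz on $H^1$, because $H^1$ is an algebra. Hence the right-hand side is a locally Lipschitz vector field on $H^1$, and the Cauchy--Lipschitz theorem gives a unique local solution in $C^1([0,T^*);H^1)$.

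Pairing the equation with $u$ gives the energy identity
\[
\frac{d}{dt}\frac12\|u\|_{H^1}^2=-\int_\T a^2u^2\,dx \ \ (j=0),\qquad \frac{d}{dt}\frac12\|u\|_{H^1}^2=-\int_\T a^2u_x^2\,dx \ \ (j=1).
\]
The flux term drops out because $\int_\T u\,\partial_x(u+u^2/2)\,dx=0$. So $\|u(t)\|_{H^1}$ is nonincreasing, the solution is global, and $\int_0^\infty\!\int_\T a^2 u^2\,dx\,dt<\infty$ (resp.\ with $u_x^2$). For $j=1$ the mean $\int_\T u\,dx$ is also conserved: integrate the equation, and all terms except $\int_\T u_t\,dx$ are exact derivatives.

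\textbf{$\omega$-limit set.} Since $\|u(t)\|_{H^1}$ is bounded, every sequence $t_n\to\infty$ has a subsequence with $u(t_n)\rightharpoonup v_0$ weakly in $H^1$, and hence strongly in $H^s$ for $s<1$ and in $C(\T)$. Consider the shifted trajectories $u(t_n+\cdot)$ on $[0,T]$. Their time derivatives are bounded in $H^1$, so by Arzel\`a--Ascoli they converge in $C([0,T];H^s)$ to some $v$. Because the vector field is continuous from $H^s$ into $H^{s}$ for $s>1/2$ (the product is controlled in $H^s$, and the smoothing $(1-\partial_x^2)^{-1}\partial_x$ gains one derivative), $v$ solves \eqref{eq:nldamp} with $v(0)=v_0$, and $v\in L^\infty(0,T;H^1)$ by weak lower semicontinuity. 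The dissipation integral over $[t_n,t_n+T]$ tends to zero, so $a v=0$ (resp.\ $a v_x=0$) on $\T\times(0,T)$. The damping term therefore vanishes and $v$ solves the conservative equation \eqref{eq:bbm} in $C(\R;H^1)$. Pick $\omega$ with $a\ne0$ on $\omega$. For $j=0$, $v=0$ on $\omega\times(0,T)$. For $j=1$, $v_x=0$ on $\omega\times(0,T)$, so $v(\cdot,t)=c(t)$ on $\omega$; then $\partial_x(v+v^2/2)=0$ there, and the equation gives $(1-\partial_x^2)v_t=0$ on $\omega$. We need $c$ constant in time. Using $v_t=-\psi_x$ with $\psi_{xx}=\psi-q$, on $\omega$ we have $\psi_x=-c'(t)$, hence $\psi_{xx}=0$ and $\psi=q=c+c^2/2$ is constant in $x$ there; but $\psi_x=-c'$, so $c'=0$. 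Thus $v=C$ on $\omega\times(0,T)$, and Theorem~\ref{thm:main} gives $v\equiv C$.

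\textbf{Identifying the limit.} For $j=0$ we get $C=0$. For $j=1$, mean conservation passes to the limit and gives $C=m_1$. The limit is therefore the same for every sequence, so the whole family $u(t)$ converges weakly in $H^1$ and strongly in $H^s$, $s<1$, to $m_j$.

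\textbf{Main obstacle.} The delicate step is passing to the limit in the nonlinearity and in the dissipation when we only have weak $H^1$ compactness, so that $v$ is a genuine $C(\R;H^1)$ solution to which Theorem~\ref{thm:main} applies. We plan to handle this by working in $H^s$ with $1/2<s<1$, where the flow is continuous and the limit is still bounded in $H^1$. Global two-sided extension of $v$ then comes from the global conservative $H^1$ flow. The second point of care is the $j=1$ argument that $c(t)$ is constant, which relies on the structure $v_t=-\psi_x$ on the observation set.
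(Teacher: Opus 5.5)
Your proposal is correct and follows essentially the same compactness--uniqueness argument as the paper: energy identity and mean conservation, limits of $u(t_n+\cdot)$ in $C([0,T];H^s)$ with $1/2<s<1$ identified through uniqueness of the $H^s$ flow, vanishing of $a\partial_x^jv$ from the integrable dissipation (by weak lower semicontinuity, using weak convergence in $L^2(0,T;H^1)$ when $j=1$), and an application of Theorem~\ref{thm:main} at the level $0$ or $m_1$. The only difference is cosmetic: for $j=1$ the paper gets $v_t=0$ on the strip directly from $\partial_x(v+v^2/2)=0$ and $v_{xxt}=0$, whereas you route it through $\psi$ as in Lemma~\ref{lem:strip}, which is equally valid (with $\omega$ taken to be an interval).
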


Rosier \cite{Rosier2016} obtained these weak limits conditionally: for $j=0$ assuming his conjecture, and for $j=1$ assuming its variant with $v=C$ on the strip. Theorem \ref{thm:main} supplies both hypotheses, for data of any size. In \cite{Rosier2016} the damping coefficient is a smooth nonnegative function which is positive on a nonempty open set, rather than a square $a^2$; such a function need not be the square of a smooth function. The proof of Theorem~\ref{thm:nldamp} uses $a^2$ only through its smoothness and nonnegativity and the fact that it does not vanish identically, so the theorem holds verbatim with $a^2$ replaced by any smooth $b\geq0$, $b\not\equiv0$, and it covers the setting of \cite{Rosier2016}. For a nonnegative flux and $j=0$, the same compactness argument is carried out in \cite[Corollary~4.2]{RosierZhang}. The conclusion is weak attraction in the energy space; the proof supplies neither strong $H^1$ attraction nor a decay rate. The Liouville theorem on the line (Proposition~\ref{prop:liouville}) also makes Rosier's boundary stabilization result \cite[Theorem~4]{Rosier2016} unconditional (Corollary~\ref{cor:boundary}).

The rest of this subsection is included for comparison. Except where stated, the results are known or follow from standard arguments, and they do not depend on Theorem \ref{thm:main}. Write $X=H^1(\T)$ and let $S(t)$ be the group for
\begin{equation}\label{eq:linbbm}
 (1-\partial_x^2)u_t+u_x=0.
\end{equation}
All nonlinear states and controls below are real, and the linear equation may be complexified.

\begin{proposition}\label{thm:lin}
If $f\in X$ and $S(t)f$ vanishes on $\omega\times(0,T)$, with $\omega$ nonempty and open, then $f=0$. For the period $2\pi$ and normalization in \eqref{eq:linbbm}, $S(t)f(x_0)=0$ for all $0<t<T$ at a single fixed point $x_0$ also implies $f=0$.
\end{proposition}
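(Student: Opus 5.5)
The plan is to diagonalize \eqref{eq:linbbm} in Fourier series, so that the single-point observation becomes an almost periodic function of time with distinct frequencies, and then to read off every Fourier coefficient from that function. The first assertion then follows from the second, since vanishing on $\omega\times(0,T)$ implies vanishing at any fixed $x_0\in\omega$ for all $t\in(0,T)$. I therefore only prove the single-point statement.

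\textbf{Step 1: Fourier representation.} Write $f=\sum_{k\in\Z}\hat f_k e^{ikx}$. Then
\[
S(t)f(x)=\sum_{k\in\Z}\hat f_k\, e^{ikx-i\omega_k t},\qquad \omega_k=\frac{k}{1+k^2}.
\]
Since $f\in H^1(\T)$, Cauchy--Schwarz gives $\sum_k|\hat f_k|\le \big(\sum_k(1+k^2)|\hat f_k|^2\big)^{1/2}\big(\sum_k(1+k^2)^{-1}\big)^{1/2}<\infty$. Hence the series converges absolutely and uniformly in $(x,t)$, and pointwise evaluation at $x_0$ is legitimate. Set $c_k=\hat f_k e^{ikx_0}$ and
\[
g(t)=S(t)f(x_0)=\sum_k c_k e^{-i\omega_k t}.
\]

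\textbf{Step 2: propagation from $(0,T)$ to $\R$.} Because $|\omega_k|\le 1/2$ and $\sum_k|c_k|<\infty$, the series for $g$ converges uniformly on compact subsets of $\C$. So $g$ extends to an entire function, with $|g(\tau)|\le \|c\|_{\ell^1}e^{|\operatorname{Im}\tau|/2}$. Since $g=0$ on $(0,T)$, the identity theorem gives $g\equiv0$ on $\R$. Equivalently, all derivatives vanish at $t_0\in(0,T)$, so that
\[
\sum_k c_k\,\omega_k^n e^{-i\omega_k t_0}=0\quad\text{for all } n\ge 0.
\]
This is the generating-series form of the information.

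\textbf{Step 3: distinct frequencies and Bohr means.} The key arithmetic fact is that $k\mapsto\omega_k$ is injective on $\Z$. Indeed, $\omega_k=\omega_j$ gives $k(1+j^2)=j(1+k^2)$, that is $(k-j)(1-kj)=0$. With integers $k\ne j$ this would force $kj=1$, hence $k=j=\pm1$, a contradiction. Then for each fixed $j$,
\[
\frac1{2L}\int_{-L}^{L} g(t)\,e^{i\omega_j t}\,dt=c_j+\sum_{k\ne j}c_k\,\frac{\sin((\omega_j-\omega_k)L)}{(\omega_j-\omega_k)L}.
\]
Each term of the sum is bounded by $|c_k|$ and tends to $0$ as $L\to\infty$. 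By dominated convergence in $\ell^1$, the right side tends to $c_j$, while the left side is $0$. Hence $c_j=0$ for every $j$, so $\hat f_j=0$ and $f=0$. The mode $k=0$, with $\omega_0=0$, is handled by the same formula.

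\textbf{Main obstacle.} The only delicate point is the accumulation of the frequencies $\omega_k\to0$. It prevents any uniform gap or Ingham-type argument, and it is why the proof uses the Bohr mean combined with the $\ell^1$ summability coming from $H^1$, rather than a spectral-gap inequality. If one prefers the generating-series route, one would instead argue from the moment identities in Step 2 via the Laplace transform: $\int_0^\infty g(t)e^{-st}\,dt=\sum_k c_k/(s+i\omega_k)$ vanishes identically. This function is meromorphic off the closure of $\{-i\omega_k\}$, and its residue at the isolated pole $-i\omega_j$ is $c_j$, so $c_j=0$ for every $j\ne0$; the coefficient $c_0$ then vanishes because $g\equiv0$. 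The normalization enters only through the injectivity computation in Step 3.
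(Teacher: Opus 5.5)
Your proof is correct. It shares three ingredients with the paper: the $\ell^1$ summability coming from $H^1$, the entire extension of $g$ (using $|\omega_k|\le 1/2$) to propagate the vanishing from $(0,T)$, and the simplicity of the spectrum through $(k-j)(1-kj)=0$. The two arguments differ in how they recover the coefficients.

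The paper forms the ordinary generating series of the moments $m_n=g^{(n)}(0)=\sum_k b_k\lambda_k^n$, namely $\sum_n m_n\eta^n=\sum_k b_k/(1-\lambda_k\eta)$. Its poles $1/\lambda_k$ are distinct and tend to infinity, so this function is meromorphic on $\C$. The paper reads off $b_k$, $k\neq0$, from the residues and gets $b_0$ from $m_0=0$.

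You instead recover each $c_j$ as the Bohr mean $\lim_{L\to\infty}\frac1{2L}\int_{-L}^{L}g(t)e^{i\omega_jt}\,dt$, which is the uniqueness theorem for absolutely convergent almost periodic series. Dominated convergence absorbs the accumulation $\omega_k\to0$, and $k=0$ needs no separate step. This is more elementary. Once $g\equiv0$ on $\R$ is known, it uses only the distinctness of the frequencies, not where they accumulate. The paper's version works with the germ of $g$ at a single time, and it uses that $\lambda_k\to0$ sends all accumulation of poles to $\eta=\infty$.

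Your Laplace-transform alternative is essentially the paper's argument. With $\lambda_k=-i\omega_k$ and $\eta=1/s$, the paper's series equals $s\sum_k c_k/(s+i\omega_k)$. The change of variables moves your accumulation point $s=0$ to infinity. That is why the paper's function is meromorphic on the whole plane, while yours is meromorphic only off a countable compact set. That set has connected complement, so your continuation argument still works.
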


Strip continuation for \eqref{eq:linbbm} is equivalent to approximate controllability with a fixed actuator. This is known, see, for instance, \cite[Section~1]{RosierZhang}, and \cite{Micu,ZhangZuazua} for related settings. Section \ref{sec:linear} gives a short spectral proof, using a meromorphic generating series of time derivatives. We have not found the single-point statement in the literature. It relies on the simplicity of the spectrum, which holds for this period and normalization.

For the control statements, forcing is measured in $L^2(0,T;H^{-1}(\T))$, the space naturally dual to the energy space. Define $a_t(x)=a(x+ct)$ for a nonzero speed $c$. The covering condition is
\begin{equation}\label{eq:covering}
 \Theta_T(x):=\int_0^T |a(x+ct)|^2\,dt,
 \qquad \kappa_T:=\min_{x\in\T}\Theta_T(x)>0.
\end{equation}
It is equivalent to requiring that for every $x\in\T$ there is $t\in(0,T)$ with $a(x+ct)\ne0$. Continuity and compactness make this pointwise condition uniform. The physical actuator translates at velocity $-c$. A sufficient condition is $T\geq2\pi/|c|$, since a full circuit gives $$\Theta_{2\pi/|c|}=|c|^{-1}\int_\T a^2\,dx>0.$$ When $\{a\ne0\}$ is large, \eqref{eq:covering} also holds for shorter times. For instance, if $\{a\ne0\}$ is an open arc $I$ of length $|I|<2\pi$, then \eqref{eq:covering} holds if and only if $|c|T>2\pi-|I|$, because an orbit arc of length $|c|T$ avoids $I$ exactly when it fits into the closed complementary arc. This threshold tends to $2\pi/|c|$ as $|I|\to0$, in agreement with \cite[Remark~5.2]{RosierZhang}, where finite speed of propagation for a model KdV--BBM equation in the moving frame shows that the condition $T>2\pi/|c|$ of \cite[Theorem~5.1]{RosierZhang} is sharp when the observation set may be arbitrarily short. Requiring only that each orbit $x+ct$, $t\in[0,T]$, meet the closed support of $a$ is weaker: an orbit may meet that support only at an endpoint time $t\in\{0,T\}$, necessarily at a boundary point of the support, and then $\Theta_T(x)=0$.

\begin{proposition}\label{thm:lincontrol}
For each $T>0$, the equation $$(1-\partial_x^2)u_t+u_x=ah$$ is approximately controllable in $X$ with $h\in L^2(0,T;H^{-1})$. It is exactly controllable in $X$ at some, equivalently every, $T>0$ if and only if $a$ has no zero on $\T$. If $a$ has a zero, it is not null controllable in $X$ at any finite time either. Under \eqref{eq:covering}, there is $C_T>0$ such that
\begin{equation}\label{eq:obs}
 \|f\|_X^2\leq C_T\int_0^T\|a_tS(t)f\|_X^2\,dt\qquad(f\in X).
\end{equation}
Consequently, $$(1-\partial_x^2)u_t+u_x=a_th$$ is exactly controllable in $X$ with controls in $L^2(0,T;H^{-1})$.
\end{proposition}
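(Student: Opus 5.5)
We treat the linear equation $(1-\partial_x^2)u_t+u_x=ah$ on $X=H^1(\T)$ by duality. In Fourier variables, $S(t)$ is diagonal: $\widehat{S(t)f}(k)=e^{-i\lambda_k t}\hat f(k)$ with $\lambda_k=k/(1+k^2)$. We use the $X$-inner product $\langle u,v\rangle_X=\langle(1-\partial_x^2)u,v\rangle_{L^2}$. The control operator $B:H^{-1}\to X$, $Bh=(1-\partial_x^2)^{-1}(ah)$, then has adjoint $B^*g=a\,g$, viewed as an element of $H^{-1}$ with the $H^{-1}$-norm dual to $X$. Up to an isomorphism of norms, the observation $\int_0^T\|B^*S(t)^*f\|_{H^{-1}}^2dt$ is comparable to $\int_0^T\|a S(-t)f\|^2$ in the appropriate norm. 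We write everything with the $X$-norm as in \eqref{eq:obs}, since multiplication by $a$ and $(1-\partial_x^2)^{\pm1}$ are harmless, and each claim reduces to a property of $S$.

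\textbf{Approximate controllability.} This is equivalent to the implication $aS(t)f=0$ on $(0,T)$ $\Rightarrow f=0$. Since $a\neq0$ is smooth, it is nonzero on some open $\omega$. Hence $S(t)f=0$ on $\omega\times(0,T)$, and Proposition~\ref{thm:lin} gives $f=0$.

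\textbf{Exact controllability iff $a$ has no zero.} If $a$ has no zero, then $\|af\|_X\ge c\|f\|_X$, and unitarity of $S(t)$ on $X$ gives
\[
\int_0^T\|aS(t)f\|_X^2\,dt\ge c^2T\|f\|_X^2.
\]
This is the observability inequality, and HUM yields exact controllability at every $T$.

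Conversely, suppose $a(x_1)=0$. We contradict observability, in fact even the weaker null-controllability inequality $\|S(T)f\|\le C\big(\int_0^T\|aS(t)f\|^2\,dt\big)^{1/2}$. Take $f_n$ to be normalized bumps concentrated at $x_1$ at scale $1/n$ with high frequency, e.g.\ $f_n=n^{-1}\chi(n(x-x_1))e^{in^2x}$ normalized in $X$. The key point is that $S(t)f_n-f_n=(S(t)-I)f_n$, and on frequencies $|k|\sim n^2$ we have $|e^{-i\lambda_kt}-1|\lesssim T/n^2$. So $S(t)f_n$ stays $O(T/n^2)$-close to $f_n$ in $X$, uniformly on $[0,T]$. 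Meanwhile $\|af_n\|_X\to0$: since $a$ vanishes at $x_1$, it is $O(1/n)$ on the support of $f_n$. The derivative hitting $a$ gives a term $\|a'f_n\|_{L^2}$, which is small because $\|f_n\|_{L^2}\lesssim n^{-2}\|f_n\|_X$. Hence the right-hand side tends to $0$ while $\|S(T)f_n\|_X=1$. This rules out null controllability, and a fortiori exact controllability, at every $T$. The main technical check is this bump estimate; the Fourier tails of the bump must be handled by choosing $\chi$ Schwartz, so that frequencies far from $n^2$ are negligible.

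\textbf{Moving-control observability \eqref{eq:obs}.} Work in a frame comoving with the actuator. The exact lower bound reduces to comparing $S(t)f$ with $f$ on high frequencies: for a frequency cutoff $P_{>N}$,
\[
\|(S(t)-I)P_{>N}f\|_X\le (T/N)\,\|f\|_X
\]
on $[0,T]$. Using $\|a_tg\|_X^2\ge\int a_t^2(|g|^2+|g_x|^2)\,dx-C\|g\|_{L^2}\|g\|_X$, we get
\[
\int_0^T\|a_tS(t)f\|_X^2\,dt\ge\int\Theta_T\big(|f|^2+|f_x|^2\big)\,dx-\text{error}\ge\kappa_T\|f\|_X^2-\text{error},
\]
where the error is controlled by $\|f\|_X\big(T/N\,\|f\|_X+\|P_{\le N}f\|_X+\|f\|_{L^2}\big)$. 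This gives a weak observability inequality with a compact remainder. We then remove the remainder by the compactness--uniqueness argument. That argument requires unique continuation for the moving problem: $a_tS(t)f=0$ on $(0,T)$ implies $f=0$. This follows because $a\neq0$ on an open set and $S(t)f$ is analytic in $t$ (entire in $t$, with values in $X$), so vanishing on a moving open set on a time subinterval reduces, after a small shrinking, to a fixed strip and Proposition~\ref{thm:lin}. Exact controllability then follows by HUM. The main obstacle is this step: making the high-frequency approximation $S(t)\approx I$ and the compactness--uniqueness argument fit together while keeping the constants uniform.
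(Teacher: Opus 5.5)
Your proposal is correct and follows essentially the same route as the paper. Both arguments use duality with the observation $\|a_tS(t)f\|_X$, Proposition~\ref{thm:lin} for approximate controllability, and bumps concentrating at a zero of $a$ to break the observability inequality; since $S(T)$ is unitary, that inequality is here equivalent to the null-controllability one. Both then obtain a weak observability estimate from $\Theta_T\geq\kappa_T$ and remove its compact remainder by compactness--uniqueness, using a rectangle inside $\{a(x+ct)\neq0\}$.

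The one simplification worth noting is the multiplier bound $\|(S(t)-I)f\|_X\leq|t|\,\|f\|_{L^2}$, which follows from $|\lambda_k|\sqrt{1+k^2}\leq1$ and is what the paper uses. It makes your frequency cutoffs, the modulation $e^{in^2x}$ and the Fourier-tail check unnecessary, so the step you flag as the main obstacle is routine.
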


The failure of exact controllability for a fixed actuator supported in a subinterval, caused by the accumulation of the spectrum at $0$, is known; see \cite[Section~1]{RosierZhang} and \cite{Micu}. It motivated the moving controls of \cite{RosierZhang}. Proposition \ref{thm:lincontrol} states it in sharp form: a single zero of $a$ suffices. For $T>2\pi/|c|$, the moving exact controllability is the case $s=1$ of \cite[Proposition~6.2]{RosierZhang}, written in the original frame.

The two natural linear feedback conclusions have different strengths.

\begin{proposition}\label{thm:linstab}
For every nonzero real $a\in C^\infty(\T)$, the solutions of
\begin{equation*}
 (1-\partial_x^2)u_t+u_x+a^2u=0
\end{equation*}
converge strongly to zero in $X$. Its solution semigroup has operator norm $1$ at every finite time, so no uniform decay estimate tending to zero holds on the unit ball. For $c\ne0$, the moving feedback equation
\begin{equation}\label{eq:linmovingdamp}
 (1-\partial_x^2)u_t+u_x=-a_t(1-\partial_x^2)(a_tu)
\end{equation}
is uniformly exponentially stable: its evolution family satisfies
\[
 \|U(t,s)\|_{X\to X}\leq M e^{-\gamma(t-s)}\qquad(t\geq s\geq0),
\]
with $M,\gamma>0$ depending only on $a$ and $c$.
\end{proposition}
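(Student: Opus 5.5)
The statement has three parts: strong but non-uniform decay for the fixed damping $a^2u$, the norm identity at every finite time, and uniform exponential stability for the moving feedback. I would treat them separately.

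\emph{Fixed damping.} Write $A=(1-\partial_x^2)^{-1}$ and take the inner product $\langle u,v\rangle_X=\int(uv+u_xv_x)$. The equation becomes $u_t=-A\partial_xu-A(a^2u)$. Since $A\partial_x$ is skew-adjoint on $X$, we get
\[
 \frac{d}{dt}\|u\|_X^2=-2\int a^2u^2\,dx .
\]
The generator is therefore a bounded perturbation of a skew-adjoint operator, and it is dissipative. It also has compact resolvent relative to the unitary part, because $A(a^2\cdot)$ is compact on $X$. I would obtain strong convergence from the standard LaSalle/Arendt--Batty type argument, in the following steps.
\begin{enumerate}
 \item Trajectories are bounded in $X$.
 \item Since $A(a^2u)$ is compact, the $\omega$-limit set in the weak topology consists of solutions with $\int_0^\infty\int a^2u^2<\infty$.
 \item A limit trajectory satisfies $a u\equiv 0$, so it solves the free equation and vanishes on $\{a\ne0\}\times\R$. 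By Proposition~\ref{thm:lin} it is zero.
\end{enumerate}
Weak convergence then upgrades to strong convergence via the decomposition of $X$ into the eigenvectors $e^{ikx}$ of $A\partial_x$. Each finite block decays, since for a fixed eigenmode $ae^{ikx}\not\equiv0$, which shows there are no imaginary eigenvalues. Equivalently, I would apply Arendt--Batty--Lyubich--V\~u. The spectrum of the generator on $i\R$ consists only of eigenvalues, because it is a compact perturbation of a skew-adjoint operator with eigenvalues $-ik/(1+k^2)$ accumulating only at $0$. Hence $\sigma\cap i\R$ is countable, and the point $0$ must be checked separately as a non-eigenvalue. Proposition~\ref{thm:lin} excludes eigenvectors.

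\emph{Norm equal to $1$.} Take $f_k=e^{ikx}/\|e^{ikx}\|_X$ with $|k|\to\infty$. Duhamel gives
\[
 U(t)f_k-S(t)f_k=-\int_0^tU(t-s)A(a^2S(s)f_k)\,ds .
\]
Now $A(a^2 e^{iks})$ in $X$ is of size $\|a^2e^{ikx}\|_{H^{-1}}\lesssim |k|^{-1}\|e^{ikx}\|_X$, because $a^2$ is smooth. So the difference is $O(t/|k|)$, and $\|U(t)f_k\|_X\to1$. This gives $\|U(t)\|=1$, which rules out any uniform decay rate.

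\emph{Moving feedback.} The energy identity is
\[
 \frac{d}{dt}\|u\|_X^2=-2\|a_tu\|_X^2 .
\]
Indeed, the feedback term pairs as $\langle A(-a_t(1-\partial_x^2)(a_tu)),u\rangle_X=-\langle a_tu,a_tu\rangle_X$, where I use that $a_t$ is real. Exponential stability follows from an observability inequality for the damped family on windows $[s,s+T]$:
\[
 \|u(s)\|_X^2\le C\int_s^{s+T}\|a_tu\|_X^2 .
\]
To prove it, compare with the free solution $S(t-s)u(s)$. The difference $e$ solves the free equation with forcing $-a_t(1-\partial_x^2)(a_tu)$, so
\[
 \|e\|_{L^\infty X}\lesssim \|a_t(1-\partial_x^2)(a_tu)\|_{L^1 H^{-1}} \lesssim \sqrt T\,\|a_tu\|_{L^2X}.
\]
Here $a_t$ acts as a bounded multiplier on $H^{-1}$, and this is exactly why the feedback is normalized with $(1-\partial_x^2)$. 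Then apply \eqref{eq:obs}, which by time translation holds with constants independent of $s$ because shifting $a$ by $cs$ preserves $\kappa_T$. Together with the triangle inequality $\|a_tS f\|_X\le\|a_tu\|_X+C\|e\|_X$, this gives the bound. It follows that $\|u(s+T)\|^2\le(1-1/(2C))\|u(s)\|^2$, and iterating gives $Me^{-\gamma(t-s)}$.

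The main obstacle is the first part. One must show that the point spectrum on $i\R$, including the accumulation point $0$, is empty and that countable spectrum suffices. The fallback is the weak-limit compactness argument, relying on the fact that $A(a^2\cdot)$ is compact on $X$.
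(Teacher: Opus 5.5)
Your proposal is correct. The moving-feedback and norm-one parts follow the paper. For norm one, the paper compares $e^{tG}$ (with $G=L-Ra^2$) to the identity on an arbitrary orthonormal sequence, using compactness of $G$. You compare $U(t)$ to the unitary group on normalized Fourier modes, using the explicit decay of $\|a^2e^{ikx}\|_{H^{-1}}$; the idea is the same.

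The genuine difference is the passage to strong convergence for the fixed damping. After the same LaSalle step giving weak convergence, the paper argues elementarily:
\begin{itemize}
\item[(a)] $G$ is compact, so $e^{tG}Gg=Ge^{tG}g\to0$ strongly;
\item[(b)] $\Ran G$ is dense, because $G^*g=0$ forces $ag=0$, then $Lg=0$, so $g$ is a constant killed by $a$;
\item[(c)] contractivity extends the convergence to all of $X$.
\end{itemize}
You invoke Arendt--Batty--Lyubich--V\~u instead, and this is valid. Since $G$ is compact, $\sigma(G)$ is countable outright. The energy pairing applied to $Gg=i\lambda g$ or to $G^*g=i\lambda g$ (with $G^*=-L-Ra^2$) gives $ag=0$. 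Then $g$ is an eigenfunction of $L$, hence a multiple of a single $e^{ikx}$, vanishing on $\{a\ne0\}$, so $g=0$. This route makes your LaSalle step unnecessary and uses only that exponentials do not vanish on open sets, rather than the full Proposition~\ref{thm:lin}. The price is a deep black-box theorem, whereas the paper's argument is self-contained.

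Two cautions on that part. Your sentence that ``each finite block decays'' is not an argument: the damped flow does not preserve Fourier blocks, and weak convergence gives no control of high modes. Likewise, your ``fallback'' weak-limit compactness argument yields only weak convergence unless you add the density-of-range step above.

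Two small corrections in the moving part. First, uniformity of the window observability in $s$ should come from conjugating by the spatial translation by $cs$, which commutes with $S(t)$ and is an isometry of $X$. The constant $C_T$ in \eqref{eq:obs} is produced by a compactness argument, so preservation of $\kappa_T$ alone does not give it. Second, the window length must satisfy \eqref{eq:covering}, for instance $T=2\pi/|c|$; this choice is also what makes $M$ and $\gamma$ depend only on $a$ and $c$.
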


In the moving frame $v(x,t)=u(x-ct,t)$, which is an isometry of $X$, the second assertion is the case $s=1$ of \cite[Lemma~6.3]{RosierZhang}. We include a short proof in the original frame.

Finally, the bounded right inverse furnished by moving linear observability gives the following local nonlinear result.

\begin{proposition}\label{thm:nlcontrol}
Assume \eqref{eq:covering}. There are $\varepsilon,K>0$, depending on $a,c,T$, such that every pair $u_0,u_1\in H^1(\T;\R)$ satisfying
\[
 \|u_0\|_{H^1}+\|u_1\|_{H^1}<\varepsilon
\]
can be joined by a real control $h\in L^2(0,T;H^{-1})$ for the equation
\begin{equation}\label{eq:nlmoving}
 (1-\partial_x^2)u_t+\partial_x(u+u^2/2)=a_th
\end{equation}
from $u(0)=u_0$ to $u(T)=u_1$, with $u\in C([0,T];H^1)$ and
\[
 \|h\|_{L^2(0,T;H^{-1})}+\|u\|_{C([0,T];H^1)}
 \leq K(\|u_0\|_{H^1}+\|u_1\|_{H^1}).
\]
\end{proposition}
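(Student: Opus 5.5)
We obtain Proposition~\ref{thm:nlcontrol} from the moving linear exact controllability of Proposition~\ref{thm:lincontrol} by a Banach fixed point argument. The linear controls are built by the Hilbert Uniqueness Method applied to the observability inequality \eqref{eq:obs}, and this gives a bounded right inverse. The quadratic term is then absorbed as a small source.

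\textbf{Step 1 (linear right inverse with a source).} For $u_0,u_1\in X$ and a source $g\in L^1(0,T;L^2)$, or more conveniently $g=-\frac12\partial_x(v^2)$ with $v\in C([0,T];H^1)$, we consider
\[
(1-\partial_x^2)u_t+u_x=a_th+g .
\]
In Duhamel form with $B=(1-\partial_x^2)^{-1}$, the solution is $u(t)=S(t)u_0+\int_0^tS(t-s)B(a_sh(s)+g(s))\,ds$. Reaching $u_1$ amounts to solving $L_Th=u_1-S(T)u_0-\int_0^TS(T-s)Bg\,ds$, where $L_Th=\int_0^TS(T-s)B(a_sh)\,ds$ maps $L^2(0,T;H^{-1})\to X$. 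With the $H^{-1}$--$H^1$ duality, the adjoint of $L_T$ is, up to the isometries $B$ and the unitarity of $S$ on $X$, the map $f\mapsto a_tS(t-T)f$ (adjoint group, with $S(t)^*=S(-t)$ in the $H^1$ inner product since $B\partial_x$ is skew). Inequality \eqref{eq:obs}, after a change of time origin, gives $\|L_T^*f\|\ge c\|f\|_X$, so $L_T$ is onto. Setting $\Lambda=L_T^*(L_TL_T^*)^{-1}$ gives a bounded real right inverse: real data produce real controls because all operators commute with complex conjugation.

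\textbf{Step 2 (nonlinear estimate).} On $Y=C([0,T];H^1)$, $H^1$ is an algebra on $\T$. Hence $\partial_x(v^2)\in C([0,T];L^2)$, and $B\partial_x$ maps $L^2\to H^1$ boundedly. This gives
\[
\Big\|\int_0^t S(t-s)B\partial_x(v^2)\,ds\Big\|_Y\le C_0T\|v\|_Y^2 ,
\]
together with a difference estimate $\le C_0T(\|v\|_Y+\|\tilde v\|_Y)\|v-\tilde v\|_Y$. Define $\Gamma(v)=u$, where $u$ solves the linear problem with $g=-\frac12\partial_x(v^2)$ and $h=\Lambda\big(u_1-S(T)u_0-\int_0^TS(T-s)Bg\,ds\big)$. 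Then
\[
\|\Gamma v\|_Y\le K_1(\|u_0\|+\|u_1\|)+K_2\|v\|_Y^2 ,
\]
and $\Gamma$ is Lipschitz with constant $2K_2r$ on the ball of radius $r$. Choosing $r=2K_1\varepsilon$ with $\varepsilon$ small makes $\Gamma$ a contraction of that ball. Its fixed point solves \eqref{eq:nlmoving} with $u(0)=u_0$ and $u(T)=u_1$. The control bound follows from the boundedness of $\Lambda$, which yields the estimate with $K$.

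\textbf{Main obstacle.} The delicate point is Step 1: checking that the adjoint of the control map is exactly the observation operator in \eqref{eq:obs} for the time-dependent multiplier $a_t$, with the correct pairing of $H^{-1}$ controls against the $X$ norm. Real-valuedness of the controls must also be preserved. We take care of both by computing $\langle L_Th,f\rangle_X=\int_0^T\langle a_sh(s),S(s-T)f\rangle_{H^{-1},H^1}\,ds$. In this computation $B$ cancels against the $H^1$ inner product because $\langle Bk,f\rangle_{H^1}=\langle k,f\rangle_{H^{-1},H^1}$. Surjectivity then follows from the inverse mapping theorem.
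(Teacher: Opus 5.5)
Your proposal is correct and follows the paper's proof. The paper also builds the bounded right inverse $P_T=K_T^*(K_TK_T^*)^{-1}$ from \eqref{eq:obs} with $f$ replaced by $S(-T)p$. It works with the abstract control $r=Rh\in L^2(0,T;X)$ rather than $h$, which is the same up to the isometry $R$. It then runs the same contraction for $\Phi(v)$, with the control chosen to absorb $Q(v)(T)$.

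One small point: you fix the radius $r=2K_1\varepsilon$ instead of taking it proportional to $\delta=\|u_0\|_{H^1}+\|u_1\|_{H^1}$ as the paper does. So the linear state bound needs the one-line estimate $\|u\|_Y\leq K_1\delta+K_2r\|u\|_Y\leq K_1\delta+\frac12\|u\|_Y$ at the fixed point. The control bound then follows from the boundedness of $\Lambda$, as you say.
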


For $T>2\pi/|c|$, Proposition \ref{thm:nlcontrol} is the case $s=1$ of \cite[Theorem~6.1]{RosierZhang}, written in the original frame. Rosier and Zhang also prove global exact controllability in large time and semiglobal exponential stabilization \cite[Theorem~1.1 and Section~6]{RosierZhang}. Section \ref{sec:control} proves Propositions \ref{thm:lincontrol}--\ref{thm:nlcontrol}. The direct applications of our nonlinear continuation results are Theorem~\ref{thm:nldamp} and Corollary~\ref{cor:boundary}, proved in Section~\ref{sec:nonlineardamping}.

\subsection{Organization of the paper}
Section~\ref{sec:weighted} contains the nonlinear continuation results. It
proves Theorem~\ref{thm:main} through a Liouville theorem on the line
(Proposition~\ref{prop:liouville}) and Theorem~\ref{thm:point} through the
weighted identity on a fundamental domain, and it closes with general
polynomial fluxes and the level $C=-1$. Section~\ref{sec:linear} proves the
linear continuation result (Proposition~\ref{thm:lin}).
Section~\ref{sec:control} proves the linear control and stabilization
statements and local nonlinear moving control.
Section~\ref{sec:nonlineardamping} proves Theorem~\ref{thm:nldamp} and
Corollary~\ref{cor:boundary}. Section~\ref{sec:remarks} contains further
remarks and open questions.

\section*{Declaration of generative AI}Large language models have been of great importance in the production of this
manuscript. Below is a brief narration of how they contributed to the current work.

During the preparation of this work, the authors used generative-AI assistants,
including Claude Opus 5.5 (Anthropic), for literature searches, for mathematical
exploration (in particular, the fundamental-domain version of the weighted
identity with the flux term, Proposition~\ref{prop:flux}, and its consequence,
Theorem~\ref{thm:point}), and for writing and running the numerical checks
described in Remark~\ref{rem:numerics}. After using these tools, the authors
reviewed and edited the content as needed, verified all mathematical arguments,
and take full responsibility for the content of the published article.

\section{A weighted Hamiltonian and nonlinear unique continuation}\label{sec:weighted}

\subsection{The flow and analyticity in time}\label{sec:flow}

For $\beta\in\R$, consider on $\T$ or $\R$ the equation
\begin{equation}\label{eq:beta}
\begin{cases}
 &(1-\partial_x^2)z_t+\partial_xF_\beta(z)=0,\\
  &F_\beta(z)=\beta z+\frac{z^2}{2}.
  \end{cases}
\end{equation}
On either domain the inverse $R=(1-\partial_x^2)^{-1}$ is the Fourier multiplier $(1+\xi^2)^{-1}$, with integer frequencies on $\T$. Since $H^1$ is an algebra, $-R\partial_xF_\beta$ is a polynomial, locally Lipschitz map from $H^1$ to $H^2$, hence to $H^1$. The Banach-space ODE gives a local $C^1$ flow. For real solutions,
\begin{equation}\label{eq:conservation}
 \frac12\frac{d}{dt}\|z\|_{H^1}^2
 =\int F_\beta(z)z_x\,dx
 =\int\partial_x\left(\frac{\beta z^2}{2}+\frac{z^3}{6}\right)dx=0.
\end{equation}
Density justifies the calculation in $H^1$; on the line the last integral also vanishes by approximation. The conserved bound prevents finite-time blow-up in either time direction. A function $z\in C(I;H^1)$ solving \eqref{eq:beta} in the sense of distributions is a trajectory of this flow: since $1-\partial_x^2$ is an isomorphism from $H^1$ onto $H^{-1}$, the equation gives $$z_t=-R\partial_xF_\beta(z)\in C(I;H^1).$$

The same polynomial extends holomorphically to the complexified $H^1$ space. The local Picard contraction on a sufficiently small complex time disk produces a holomorphic solution, agreeing with the real flow by uniqueness. Applying this at each real time proves that the real trajectory is real analytic on $\R$. A uniform complex-time radius is not needed. For $\beta=1$ this is \cite[Proposition~2.2]{RosierZhang}.

We shall also use the following standard facts on $\T$. Every $f\in H^1(\T)$ has a continuous representative, with $$|f(x)-f(y)|\leq|x-y|^{1/2}\|f_x\|_2,$$ hence bounded subsets of $H^1(\T)$ are relatively compact in $C(\T)$ by the Arzel\`a--Ascoli theorem, and point evaluations are continuous linear functionals on $H^1(\T)$. Moreover $H^3(\T)\subset C^2(\T)$, and on $\T$ the resolvent is the convolution $Rf=G*f$ with the positive kernel $G$ introduced after \eqref{eq:defpsi}. Since $z\mapsto RF_\beta(z)$ is a continuous polynomial map from $H^1(\T)$ to $H^3(\T)$, the analyticity of $t\mapsto z(t)\in H^1(\T)$ implies that, for every $x_0\in\T$, the functions $t\mapsto z(x_0,t)$, $t\mapsto(RF_\beta(z(t)))(x_0)$ and $t\mapsto\partial_x(RF_\beta(z(t)))(x_0)$ are real analytic on $\R$.

\subsection{Shift of the level}

For $C\in\R$ we set
\begin{equation*}
 \beta=1+C,\\ \quad q_C=C+\frac{C^2}{2},\\ \quad\text{and } \mathcal H_\beta(z)=\frac{\beta z^2}{2}+\frac{z^3}{6},
\end{equation*}
so that $\mathcal H_\beta'=F_\beta$ and $\mathcal H_\beta(0)=0$.

\begin{lemma}\label{lem:shift}
Let $u\in C(\R;H^1(\T;\R))$ solve \eqref{eq:bbm}, let $\psi$ be given by \eqref{eq:defpsi}, and let $C\in\R$. Then $z=u-C$ is a solution of \eqref{eq:beta} in $C^1(\R;H^1(\T))$, the function $w:=RF_\beta(z)$ satisfies $$w=\psi-q_C\in C(\R;H^3(\T)),$$ and, for every $t\in\R$,
\begin{equation}\label{eq:zw}
\begin{cases}
  &z_t=-w_x,\\
  &w-w_{xx}=F_\beta(z),\\ 
  &\|z(t)\|_{H^1(\T)}=\|z(0)\|_{H^1(\T)}.
  \end{cases}
\end{equation}
\end{lemma}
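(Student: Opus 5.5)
The plan is a direct algebraic verification, combined with the flow facts of Section~\ref{sec:flow}. First I will record the key identity: for $u=z+C$,
\[
u+\frac{u^2}{2}=z+C+\frac{z^2}{2}+Cz+\frac{C^2}{2}=(1+C)z+\frac{z^2}{2}+q_C=F_\beta(z)+q_C .
\]
Since constants are invariant under $R=(1-\partial_x^2)^{-1}$ on $\T$ (the multiplier equals $1$ at frequency $0$), this gives $\psi=Rq=RF_\beta(z)+q_C$, so that $w=\psi-q_C$. Moreover $\partial_x(u+u^2/2)=\partial_xF_\beta(z)$, and $(1-\partial_x^2)z_t=(1-\partial_x^2)u_t$ because $C$ does not depend on $t$. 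Hence $z$ solves \eqref{eq:beta} in the sense of distributions.

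Next comes regularity. Since $z\in C(\R;H^1(\T))$ is a distributional solution, the discussion in Section~\ref{sec:flow} shows that it is a trajectory of the $C^1$ flow, with $z_t=-R\partial_xF_\beta(z)\in C(\R;H^1)$, so $z\in C^1(\R;H^1(\T))$. The map $z\mapsto F_\beta(z)$ is continuous from $H^1$ to $H^1$ because $H^1$ is an algebra, and $R$ maps $H^1$ to $H^3$ boundedly; therefore $w=RF_\beta(z)\in C(\R;H^3(\T))$. The identity $w-w_{xx}=F_\beta(z)$ is simply the definition of $R$. Since $R$ commutes with $\partial_x$, we get $z_t=-R\partial_xF_\beta(z)=-\partial_xRF_\beta(z)=-w_x$.

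Finally, conservation of the $H^1$ norm follows from \eqref{eq:conservation}, which was derived for real solutions of \eqref{eq:beta} by density. Because $t\mapsto\|z(t)\|_{H^1}^2$ is $C^1$ with zero derivative on $\R$, it is constant.

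The only step needing any care is the justification of \eqref{eq:conservation} in $H^1$. Here $\int F_\beta(z)z_x\,dx=\int\partial_x\mathcal H_\beta(z)\,dx$ holds because $\mathcal H_\beta(z)\in W^{1,1}(\T)$ with chain rule $\partial_x\mathcal H_\beta(z)=F_\beta(z)z_x$, and the integral of a derivative of a periodic absolutely continuous function vanishes. The pairing $\frac{d}{dt}\frac12\|z\|_{H^1}^2=\langle (1-\partial_x^2)z_t,z\rangle_{H^{-1},H^1}=-\langle\partial_xF_\beta(z),z\rangle=\int F_\beta(z)z_x\,dx$ is legitimate because $z\in C^1(\R;H^1)$. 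I therefore expect no real obstacle; this step is bookkeeping.
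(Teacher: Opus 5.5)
Your proposal is correct and follows the paper's proof essentially step by step: the identity $q=q_C+F_\beta(z)$, the invariance of constants under $R$ giving $w=\psi-q_C$, the appeal to Section~\ref{sec:flow} for the $C^1$ trajectory, and \eqref{eq:conservation} for the conserved norm. The only differences are cosmetic. You obtain $z_t=-w_x$ by commuting $R$ with $\partial_x$, whereas the paper writes $z_t=u_t=-\psi_x=-w_x$. You also spell out the $C(\R;H^3)$ regularity of $w$ and the $W^{1,1}$ chain-rule justification of \eqref{eq:conservation}, which the paper leaves to Section~\ref{sec:flow} and a density argument.
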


\begin{proof}
Since $$u+\frac{u^2}{2}=\beta z+\frac{z^2}{2}+C+\frac{C^2}{2},$$ we have $q=q_C+F_\beta(z)$, and substitution into \eqref{eq:bbm} shows that $z$ solves \eqref{eq:beta}. By Section~\ref{sec:flow}, $z\in C^1(\R;H^1(\T))$ is a trajectory of the flow. As $R$ maps constants to themselves, $\psi=Rq=q_C+RF_\beta(z)$, that is, $w=\psi-q_C$; then $z_t=u_t=-\psi_x=-w_x$ and $w-w_{xx}=F_\beta(z)$. The last identity in \eqref{eq:zw} is \eqref{eq:conservation}.
\end{proof}

\subsection{A Liouville theorem on the line, and the proof of Theorem~\ref{thm:main}}

\begin{lemma}\label{lem:cut}
If a real $H^1(\T)$ solution of \eqref{eq:beta} vanishes on $\omega\times(0,T)$, it determines a solution $\widetilde z\in C^1(\R;H^1(\R))$ of \eqref{eq:beta} and a fixed compact interval $[A,B]$ such that
$\supp\widetilde z(t)\subset[A,B]$ for all $t\in\R$. Its $H^1(\R)$ norm is uniformly bounded, and $\widetilde z=0$ implies $z=0$.
\end{lemma}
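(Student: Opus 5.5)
First I propagate the vanishing from the strip to all times, then cut the torus open at the zero set. By Section~\ref{sec:flow}, $t\mapsto z(t)\in H^1(\T)$ is real analytic on $\R$. For each $x\in\omega$, point evaluation is a continuous linear functional, so $t\mapsto z(x,t)$ is real analytic on $\R$. It vanishes on $(0,T)$, hence $z(x,t)=0$ for all $t\in\R$. Thus $z=0$ on $\omega\times\R$.

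Next I would pick a point $x_1\in\omega$ and a small closed arc $[x_1-\delta,x_1+\delta]\subset\omega$. I identify $\T$ with the interval $[x_1,x_1+2\pi]$ and set $\widetilde z(x,t)=z(x,t)$ for $x\in[x_1,x_1+2\pi]$, with $\widetilde z=0$ outside. Then $\supp\widetilde z(t)\subset[A,B]:=[x_1+\delta,x_1+2\pi-\delta]$ for every $t$. Since $z(t)$ vanishes on a neighbourhood of the cut point, the extension by zero lies in $H^1(\R)$, and $\|\widetilde z(t)\|_{H^1(\R)}=\|z(t)\|_{H^1(\T)}=\|z(0)\|_{H^1(\T)}$ by \eqref{eq:zw}. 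The map $f\mapsto\widetilde f$, from functions in $H^1(\T)$ vanishing near $x_1$ to $H^1(\R)$, is linear and isometric. Therefore $\widetilde z\in C^1(\R;H^1(\R))$ with $\widetilde z_t=\widetilde{z_t}$. The last claim is immediate: $\widetilde z=0$ forces $z=0$ on $[x_1,x_1+2\pi]$, which is all of $\T$.

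The main step is to check that $\widetilde z$ solves \eqref{eq:beta} on $\R$. Locality of $\partial_xF_\beta$ does not suffice, because $R$ is nonlocal and the line resolvent differs from the periodic one. I would work with the distributional form $(1-\partial_x^2)\widetilde z_t+\partial_xF_\beta(\widetilde z)=0$ and test it against $\varphi\in C_c^\infty(\R)$. Choose a partition of unity in $x$ subordinate to intervals of length less than $2\pi$, each either contained in $(x_1,x_1+2\pi)$ or in a neighbourhood of some cut point $x_1+2\pi k$.
\begin{itemize}
\item On an interval inside $(x_1,x_1+2\pi)$, the equation $(1-\partial_x^2)z_t+\partial_xF_\beta(z)=0$ holds in $\mathcal D'$ on $\T$, and it transfers to $\R$ there because the differential operator is local.
\item Near a cut point, and outside $[x_1,x_1+2\pi]$, we have $\widetilde z(t)\equiv0$. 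Since $z=0$ on $\omega\times\R$, also $z_t=0$ there, so both sides vanish.
\end{itemize}
The key point is that $z_t=0$ on $\omega$: $z_t$ is the time derivative of an $H^1$-valued function that vanishes on $\omega$ for all times. So the differential (not resolvent) form of the equation holds on $\R$ in $\mathcal D'(\R)$ for each $t$.

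Since $1-\partial_x^2\colon H^1(\R)\to H^{-1}(\R)$ is an isomorphism, this gives $\widetilde z_t=-R_\R\partial_xF_\beta(\widetilde z)$. Hence $\widetilde z$ is a trajectory of the line flow of Section~\ref{sec:flow}, as required.
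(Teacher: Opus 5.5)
Your proposal is correct and follows essentially the same route as the paper. Both arguments propagate the vanishing to all times by analyticity, cut the period at a point inside $\omega$ and extend by zero, and verify the equation on the line through the local differential form, using that $z$, $z_t$ and $F_\beta(z)=F_\beta(0)=0$ vanish on the collars around the cut. The only cosmetic difference is that you propagate in time through point evaluations at each $x\in\omega$, whereas the paper uses the restriction map $H^1(\T)\to L^2(\omega)$.
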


\begin{proof}
Restriction to $\omega$ is a bounded linear map from $H^1(\T)$ to $L^2(\omega)$. Since $t\mapsto z(t)$ is real analytic (see Section~\ref{sec:flow}), the map $t\mapsto z(t)|_\omega\in L^2(\omega)$ is real analytic and vanishes on $(0,T)$. By the identity theorem it vanishes for all $t\in\R$. Choose a cut point strictly inside an interval contained in $\omega$. In the corresponding period, $z(t)$ and $z_t(t)$ vanish on fixed neighborhoods (collars) of both endpoints for all $t$. Extend that period by zero. Equivalently, multiply the periodic lift by a fixed smooth cutoff which is one outside the zero collars and vanishes near the endpoints, and then extend by zero. This is a bounded linear map into $H^1(\R)$ and gives a $C^1$ trajectory $\widetilde z$ supported in a fixed interval $[A,B]$. There are no interface contributions: $z$, $z_t$ and $F_\beta(z)$ vanish on the collars because $F_\beta(0)=0$, and the operators in \eqref{eq:beta} are local, hence $\widetilde z$ solves \eqref{eq:beta} in the sense of distributions on the line. Its $H^1(\R)$ norm equals the $H^1(\T)$ norm of $z$, which is conserved by \eqref{eq:conservation}. The propagation to all times is as in the proof of \cite[Theorem~3.1]{RosierZhang}, and the extension by zero is the one used in the proof of \cite[Theorem~4.1]{RosierZhang}.
\end{proof}

\begin{lemma}\label{lem:resolventsupport}
Let $z\in C^1(\R;H^1(\R))$ solve \eqref{eq:beta} with support in a fixed interval $[A,B]$ for all times. Then
\begin{equation}\label{eq:w}
\begin{cases}
& w(t)=RF_\beta(z(t))\in H^3(\R),\\
 &z_t=-w_x,\\ 
 &\supp w(t)\subset[A,B].
 \end{cases}
\end{equation}
\end{lemma}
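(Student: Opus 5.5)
We must show three things: the regularity $w(t)\in H^3(\R)$, the relation $z_t=-w_x$, and the support property $\supp w(t)\subset[A,B]$.

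\textbf{Regularity and $z_t=-w_x$.} Since $H^1(\R)$ is an algebra, $F_\beta(z(t))\in H^1(\R)$. The multiplier $R=(1+\xi^2)^{-1}$ maps $H^1(\R)$ to $H^3(\R)$, so $w(t)=RF_\beta(z(t))\in H^3(\R)$. As in Section~\ref{sec:flow}, a solution in $C(\R;H^1)$ satisfies $z_t=-R\partial_xF_\beta(z)$. Because $R$ and $\partial_x$ are Fourier multipliers they commute, which gives $z_t=-\partial_x RF_\beta(z)=-w_x$.

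\textbf{Support.} Fix $t$. Since $z$ is $C^1$ in time with values in $H^1$ and $\supp z(s)\subset[A,B]$ for all $s$, the difference quotients are supported in $[A,B]$. Their limit $z_t(t)$ is therefore also supported in $[A,B]$, so $w_x(t)=-z_t(t)$ vanishes on $(-\infty,A)$ and on $(B,\infty)$. Hence $w(t)$ is constant on each of these half-lines. But $w(t)\in H^3(\R)\subset H^1(\R)$ tends to $0$ at $\pm\infty$, so both constants are zero, and $\supp w(t)\subset[A,B]$.

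The only step needing any care is identifying the limit of the difference quotients in $H^1$ with a function supported in $[A,B]$. This holds because the set of $H^1$ functions vanishing a.e.\ outside $[A,B]$ is closed in $H^1$, and hence in $L^2$.

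The support can also be obtained independently of the time derivative. Outside $[A,B]$ we have $F_\beta(z)=0$, so $w-w_{xx}=0$ there. Thus $w=c_1e^{x}+c_2e^{-x}$ on each half-line, and decay at infinity leaves $w=c\,e^{x}$ on $(-\infty,A)$. The vanishing of $w_x$ established above then forces $c=0$, and likewise on the right. This makes consistent the two moment conditions $\int e^{\pm x}F_\beta(z)\,dx=0$, which will be used later.
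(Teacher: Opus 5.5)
Your proof is correct and follows the paper's argument essentially step for step. Regularity comes from the algebra property and the multiplier $R$, then $z_t=-R\partial_xF_\beta(z)=-w_x$, and finally $w_x=-z_t=0$ on the exterior half-lines together with decay of $w$ at $\pm\infty$ forces $w=0$ there. One caveat on your closing aside: it is not actually independent of the time derivative, since eliminating the coefficient $c$ in $w=c\,e^{x}$ still uses $w_x=-z_t=0$ outside $[A,B]$. This is exactly the paper's remark that compact support of $F_\beta(z)$ alone does not give compact support of $w$. The moment conditions $\int_\R e^{\pm x}F_\beta(z)\,dx=0$ are therefore a consequence of the fixed support of $z$, not an input.
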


\begin{proof}
The algebra property and the resolvent multiplier give $w\in C(\R;H^3(\R))$. As $1-\partial_x^2$ is an isomorphism from $H^1(\R)$ onto $H^{-1}(\R)$, the equation on the line gives $$z_t=-R\partial_xF_\beta(z)=-w_x.$$ Fixed support implies that $z_t$ vanishes on both exterior half-lines. Thus $w$ is constant on each half-line, and $w\in L^2(\R)$ forces both constants to be zero. Notice that compact support of $F_\beta(z)$ alone would not imply compact support of its resolvent. Given $\supp F_\beta(z(t))\subset[A,B]$, pairing $F_\beta(z)=w-w_{xx}$ with $e^{\pm x}$, or using the kernel $e^{-|x|}/2$ of $R$ on the line, shows that the support assertion for $w(t)$ is equivalent to $$\int_\R F_\beta(z)e^{\pm x}\,dx=0,$$ for a nonnegative flux, the identity with $e^x$ is the argument of \cite[Theorem~4.1]{RosierZhang}.
\end{proof}

\begin{lemma}\label{lem:weighted}
For a trajectory as in Lemma \ref{lem:resolventsupport} and $\alpha\in\R$, define
\[
 N_\alpha(t)=\int_\R e^{\alpha x}
 \left(\frac{\beta z(x,t)^2}{2}+\frac{z(x,t)^3}{6}\right)dx,
 \qquad\text{and}\quad W(x,t)=e^{\alpha x/2}w(x,t).
\]
Then $N_\alpha$ is continuously differentiable and
\begin{equation}\label{eq:weighted}
 N_\alpha'(t)=-\frac\alpha2\left(\|W_x(t)\|_2^2+
 \left(\frac{\alpha^2}{4}-1\right)\|W(t)\|_2^2\right).
\end{equation}
If $\alpha>2$ and $z$ is uniformly bounded in $H^1$, then $N_\alpha$ is bounded and nonincreasing on $\R$, and
$$\int_\R\|w(t)\|_2^2\,dt<\infty.$$
\end{lemma}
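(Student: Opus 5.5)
The plan is a direct computation of $N_\alpha'$, followed by integration by parts in which we use $z_t=-w_x$ and then substitute $F_\beta(z)=w-w_{xx}$. First, regularity. The map $z\mapsto\int e^{\alpha x}\mathcal H_\beta(z)\,dx$ restricted to functions supported in $[A,B]$ is a $C^1$ polynomial functional on $H^1(\R)$, and the weight is bounded there. Since $z\in C^1(\R;H^1)$, the chain rule gives $N_\alpha\in C^1$ with
\[
 N_\alpha'(t)=\int_\R e^{\alpha x}F_\beta(z)z_t\,dx=-\int_\R e^{\alpha x}(w-w_{xx})w_x\,dx .
\]
By Lemma \ref{lem:resolventsupport}, $w(t)\in H^3$ is supported in $[A,B]$. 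Hence every integration by parts below has no boundary terms, and the unbounded weight causes no trouble.

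Next, the two terms. We have $\int e^{\alpha x}ww_x=\tfrac12\int e^{\alpha x}(w^2)_x=-\tfrac\alpha2\int e^{\alpha x}w^2$. Also $\int e^{\alpha x}w_{xx}w_x=\tfrac12\int e^{\alpha x}(w_x^2)_x=-\tfrac\alpha2\int e^{\alpha x}w_x^2$. This gives
\[
 N_\alpha'=\tfrac\alpha2\int e^{\alpha x}w^2-\tfrac\alpha2\int e^{\alpha x}w_x^2 .
\]
To pass to $W$, write $W_x=e^{\alpha x/2}(w_x+\tfrac\alpha2 w)$. Then $\int W_x^2=\int e^{\alpha x}w_x^2+\alpha\int e^{\alpha x}ww_x+\tfrac{\alpha^2}4\int e^{\alpha x}w^2$, and the middle term equals $-\tfrac{\alpha^2}{2}\int e^{\alpha x}w^2$. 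So
\[
 \int e^{\alpha x}w_x^2=\|W_x\|_2^2+\tfrac{\alpha^2}{4}\|W\|_2^2 .
\]
Substituting this yields \eqref{eq:weighted}.

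Finally, take $\alpha>2$. The right side of \eqref{eq:weighted} is then $\le0$, so $N_\alpha$ is nonincreasing. For boundedness, use $|N_\alpha(t)|\le e^{\alpha\max(|A|,|B|)}\big(\tfrac{|\beta|}{2}\|z\|_2^2+\tfrac16\|z\|_\infty\|z\|_2^2\big)$ together with the embedding $H^1\subset L^\infty$ and the uniform $H^1$ bound. Integrating \eqref{eq:weighted} over $[-S,S]$ gives
\[
 \tfrac\alpha2\big(\tfrac{\alpha^2}4-1\big)\int_{-S}^S\|W\|_2^2\,dt\le N_\alpha(-S)-N_\alpha(S)\le 2\sup|N_\alpha| .
\]
On $[A,B]$ we have $e^{\alpha x}\ge e^{-\alpha|A|}$, so $\|w\|_2^2\le e^{\alpha|A|}\|W\|_2^2$. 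Letting $S\to\infty$ gives $\int_\R\|w\|_2^2\,dt<\infty$.

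The only real care point is justifying the differentiation and the integrations by parts with the exponential weight. This is handled entirely by the fixed compact support of both $z$ and $w$, the latter coming from Lemma \ref{lem:resolventsupport}, together with $w\in H^3$, so that $w_x^2$ has an integrable derivative.
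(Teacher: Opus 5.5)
Your proposal is correct and follows essentially the same route as the paper. Both use the chain rule on the closed fixed-support subspace and the substitutions $z_t=-w_x$ and $F_\beta(z)=w-w_{xx}$, with integrations by parts free of boundary terms thanks to the compact support of $w(t)\in H^3$. Both then pass to $W=e^{\alpha x/2}w$ and integrate in time, using the uniform bound on $N_\alpha$ and the equivalence of weighted and unweighted $L^2$ norms on $[A,B]$.
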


\begin{proof}
The functional defining $N_\alpha$ is $C^1$ on the closed subspace of $H^1(\R)$ consisting of functions supported in $[A,B]$. Indeed, on this subspace the exponential weight may be replaced by a bounded smooth function agreeing with it near $[A,B]$. This also justifies the chain rule, with $$N_\alpha'(t)=\int_\R e^{\alpha x}F_\beta(z)z_t\,dx,$$ no assertion about an exponential functional on all of $H^1(\R)$ is needed. By \eqref{eq:w},
\begin{align*}
 N_\alpha'(t)&=-\int_\R e^{\alpha x}(w-w_{xx})w_x\,dx=\frac\alpha2\int_\R e^{\alpha x}(w^2-w_x^2)\,dx.
\end{align*}
All boundary terms vanish because $w\in H^3$ has compact support. Furthermore, $$e^{\alpha x/2}w_x=W_x-\alpha W/2$$ and the cross term $$-\alpha\int WW_x=-\frac\alpha2\int(W^2)_x$$ vanishes, so that
\[
 \int_\R e^{\alpha x}w_x^2\,dx
 =\int_\R\left(W_x-\frac\alpha2W\right)^2dx
 =\|W_x\|_2^2+\frac{\alpha^2}{4}\|W\|_2^2,
\]
which proves \eqref{eq:weighted}. The fixed support and the uniform $H^1$ and $L^\infty$ bounds make $|N_\alpha|$ uniformly bounded. If $\alpha>2$, integration of \eqref{eq:weighted} on $[-r,r]$, followed by $r\to\infty$, gives
$$\frac\alpha2\big(\frac{\alpha^2}{4}-1\big)\int_\R\|W(t)\|_2^2\,dt\leq2\sup_\R|N_\alpha|,$$ on $[A,B]$ the weighted and unweighted $L^2$ norms are equivalent.
\end{proof}

\begin{lemma}\label{lem:compactzero}
Let $\alpha\in\R$, and suppose $z_n$ are real, uniformly bounded in $H^1(\R)$, and supported in $[A,B]$. If $RF_\beta(z_n)\to0$ in $L^2(\R)$, then $z_n\to0$ in $L^2(\R)$ and
\[
 \int_\R e^{\alpha x}\left(\frac{\beta z_n^2}{2}+\frac{z_n^3}{6}\right)dx\longrightarrow0.
\]
\end{lemma}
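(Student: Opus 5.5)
Since the sequence is bounded in $H^1(\R)$ and supported in the fixed interval $[A,B]$, I will argue by subsequences. It suffices to show that every subsequence has a further subsequence along which both conclusions hold.

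\textbf{Compactness.} By Rellich on $[A,B]$ and the embedding $H^1\subset C$ in one dimension, any subsequence has a further subsequence with $z_n\rightharpoonup z$ weakly in $H^1(\R)$ and $z_n\to z$ uniformly on $\R$. The limit $z$ is supported in $[A,B]$. The sequence is bounded in $L^\infty$, so $F_\beta(z_n)\to F_\beta(z)$ uniformly, and hence in $L^2(\R)$ because all supports lie in $[A,B]$. Since $R$ is bounded on $L^2(\R)$ (multiplier $(1+\xi^2)^{-1}\le 1$), we get $RF_\beta(z_n)\to RF_\beta(z)$ in $L^2$. The hypothesis then gives $RF_\beta(z)=0$. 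Applying $1-\partial_x^2$ in the sense of distributions, or noting that the multiplier of $R$ never vanishes, we obtain $F_\beta(z)=0$ almost everywhere, and so everywhere by continuity.

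\textbf{Identifying the limit.} The condition $F_\beta(z)=z(\beta+z/2)=0$ means that at each $x$ either $z(x)=0$ or $z(x)=-2\beta$. If $\beta=0$, then $z\equiv0$ immediately. If $\beta\neq0$, the continuous function $z$ takes values in the two-point set $\{0,-2\beta\}$. Since $\R$ is connected, $z$ is constant. Because $z$ is compactly supported, that constant is $0$. This is the only place where the isolated-zero property of the flux enters. It is also exactly the step that fails for the $L^2$ step profiles \eqref{eq:step}, where continuity is lost. So I expect it to be the crux, though here it is easy thanks to the $H^1\subset C$ embedding.

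\textbf{Conclusion.} Every subsequence thus has a further subsequence with $z_n\to0$ uniformly and with supports in $[A,B]$. Along it, $z_n\to0$ in $L^2(\R)$. The weight $e^{\alpha x}$ is bounded on $[A,B]$ and the integrand $\beta z_n^2/2+z_n^3/6$ tends to $0$ uniformly on $[A,B]$, so the weighted integrals tend to $0$. Since the limit $0$ does not depend on the subsequence, the full sequence converges, which proves the lemma.
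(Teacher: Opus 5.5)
Your proposal is correct and follows essentially the same route as the paper: compactness of the fixed-support $H^1$ ball, passage to the limit in $RF_\beta(z_n)$ using boundedness of $R$ on $L^2$, injectivity of $R$ to get $F_\beta(z_*)=0$, and then $z_*=0$ from continuity, connectedness and compact support. The only difference is cosmetic. You extract uniform convergence via $H^1\subset C$ (Arzel\`a--Ascoli), while the paper uses strong $L^2$ convergence together with the uniform $L^\infty$ bound, and either suffices for both conclusions.
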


\begin{proof}
The set $\{v\in H^1(\R):\supp v\subset[A,B]\}$ embeds compactly into $L^2(\R)$. Hence every subsequence has a further subsequence converging strongly in $L^2$ and weakly in $H^1$ to a function $z_*\in H^1(\R)$ supported in $[A,B]$. The uniform $L^\infty$ bound gives $F_\beta(z_n)\to F_\beta(z_*)$ in $L^2$. Since $R$ is bounded on $L^2$, $RF_\beta(z_n)\to RF_\beta(z_*)$, so $RF_\beta(z_*)=0$, and $F_\beta(z_*)=0$ because $R$ is injective. The continuous representative of $z_*$ therefore takes values in the discrete set $\{0,-2\beta\}$ (a single point if $\beta=0$), and it vanishes outside $[A,B]$. Connectedness of $\R$ forces $z_*=0$. Every subsequential limit is consequently zero. The uniform $L^\infty$ bound, the fixed support and strong $L^2$ convergence make both the quadratic and cubic integrals tend to zero.
\end{proof}

\begin{proposition}\label{prop:liouville}
Let $z\in C(\R;H^1(\R))$ be a real solution of \eqref{eq:beta} which is supported in a fixed compact interval $[A,B]$ for all $t$ in a nonempty open time interval. Then $z\equiv0$.
\end{proposition}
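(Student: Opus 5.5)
The plan is to first upgrade the support hypothesis from a time interval to all of $\R$, and then combine Lemmas~\ref{lem:resolventsupport}, \ref{lem:weighted} and \ref{lem:compactzero} with a monotonicity argument.

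\emph{Step 1: support for all times.} Since $z\in C(\R;H^1(\R))$ solves \eqref{eq:beta}, it is a trajectory of the flow, hence $C^1$ and real analytic in $t$ with values in $H^1(\R)$ (Section~\ref{sec:flow}; the holomorphic Picard argument works verbatim on the line). For any test function $\varphi\in C_c^\infty(\R\setminus[A,B])$, the scalar function $t\mapsto\langle z(t),\varphi\rangle$ is real analytic and vanishes on the given open time interval. By the identity theorem it vanishes for all $t\in\R$, so $\supp z(t)\subset[A,B]$ for every $t$. The $H^1$ norm is conserved by \eqref{eq:conservation}, so $z$ is uniformly bounded in $H^1(\R)$, hence in $L^\infty$. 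Lemma~\ref{lem:resolventsupport} then applies and gives $w=RF_\beta(z)$ with $z_t=-w_x$ and $\supp w(t)\subset[A,B]$.

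\emph{Step 2: dissipation.} Fix $\alpha>2$, say $\alpha=3$. Lemma~\ref{lem:weighted} shows that $N_\alpha$ is bounded and nonincreasing on $\R$, and that $\int_\R\|w(t)\|_2^2\,dt<\infty$. Consequently there are sequences $t_n\to+\infty$ and $s_n\to-\infty$ with $\|w(t_n)\|_2\to0$ and $\|w(s_n)\|_2\to0$. Applying Lemma~\ref{lem:compactzero} to $z(t_n)$ and to $z(s_n)$ gives $N_\alpha(t_n)\to0$ and $N_\alpha(s_n)\to0$. Since $N_\alpha$ is monotone, its limits at $\pm\infty$ exist and both equal $0$. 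A nonincreasing function with equal limits at both ends is constant, so $N_\alpha\equiv0$ and $N_\alpha'\equiv0$.

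\emph{Step 3: conclusion.} By \eqref{eq:weighted}, $N_\alpha'=0$ with $\alpha^2/4-1>0$ forces $W(t)=0$, so $w(t)=0$ for every $t$. Then $z_t=-w_x=0$, so $z$ is constant in time. Moreover $F_\beta(z(t))=w-w_{xx}=0$, so the continuous representative of $z(t)$ takes values in $\{0,-2\beta\}$. Since it vanishes outside $[A,B]$, connectedness gives $z(t)=0$. Equivalently, one may apply Lemma~\ref{lem:compactzero} to the constant sequence $z_n=z(t)$.

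\emph{Main obstacle.} The delicate step is Step 1, the propagation of the support to all times. It requires the analyticity of the flow on $H^1(\R)$, including in the backward direction, since Lemmas~\ref{lem:weighted} and \ref{lem:compactzero} need the support condition on all of $\R$ in order to take limits as $t\to\pm\infty$. If the holomorphic extension on the line needs care, the fallback is to test against $\varphi$ supported off $[A,B]$: the resulting scalar function is analytic because $z\mapsto -R\partial_xF_\beta(z)$ is a polynomial map on $H^1(\R)$, exactly as on $\T$.
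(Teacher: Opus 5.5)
Your proposal is correct and follows the paper's proof essentially step by step. You use analyticity in time and the identity theorem to propagate the support to all of $\R$, then Lemma~\ref{lem:weighted} and Lemma~\ref{lem:compactzero} along sequences $t_n\to\pm\infty$, monotonicity to force $N_\alpha\equiv0$, and \eqref{eq:weighted} with connectedness to conclude; the only cosmetic differences are that you apply the identity theorem to scalar pairings $\langle z(t),\varphi\rangle$ rather than to the restriction map into $L^2(\R\setminus[A,B])$, and that you phrase the final squeeze through the monotone limits at $\pm\infty$.
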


\begin{proof}
By Section~\ref{sec:flow}, $z\in C^1(\R;H^1(\R))$ is a trajectory of the flow, its $H^1(\R)$ norm is conserved, and $t\mapsto z(t)$ is real analytic. Restriction to $\R\setminus[A,B]$ is a bounded linear map into $L^2(\R\setminus[A,B])$; by the identity theorem, $z(t)$ is supported in $[A,B]$ for all $t\in\R$. Fix $\alpha>2$. Lemma \ref{lem:weighted} supplies sequences $t_n^+\to+\infty$ and $t_n^-\to-\infty$ along which $\|w(t_n^\pm)\|_2\to0$. By Lemma \ref{lem:compactzero}, $N_\alpha(t_n^\pm)\to0$. For each fixed $t$, monotonicity gives, for large $n$,
\[
 N_\alpha(t_n^-)\geq N_\alpha(t)\geq N_\alpha(t_n^+).
\]
Hence $N_\alpha(t)=0$ for every $t$, and $N_\alpha'\equiv0$. Since $\alpha>2$, \eqref{eq:weighted} implies $W=0$, hence $w=0$, and then $F_\beta(z)=w-w_{xx}=0$. The continuity and connectedness argument of Lemma \ref{lem:compactzero} gives $z=0$. Both time directions are essential to this argument: a single zero limit would not pin a monotone functional of indefinite sign to zero.
\end{proof}

\begin{proof}[Proof of Theorem~\ref{thm:main}]
By Lemma~\ref{lem:shift}, $z=u-C$ is a real $H^1(\T)$ solution of \eqref{eq:beta} with $\beta=1+C$, and it vanishes on $\omega\times(0,T)$. By Lemma~\ref{lem:cut}, it determines a solution $\widetilde z$ of \eqref{eq:beta} on the line, supported in a fixed compact interval for all times. Proposition~\ref{prop:liouville} gives $\widetilde z=0$, hence $z=0$, and returning to the fundamental period shows $u=C$ on $\T\times\R$. For $C=0$ this is Rosier's conjecture, with no restriction on $u_0\in H^1(\T)$.
\end{proof}

\subsection{Observation at a single point}

\begin{lemma}\label{lem:strip}
Assume that $u=C$ on $\omega\times(0,T)$, where $\omega\subset\T$ is nonempty and open. Then, for every $t\in(0,T)$, the functions $z(t)$, $z_t(t)$, $w(t)$ and $w_x(t)$ of Lemma~\ref{lem:shift} vanish identically on $\omega$. In particular, \eqref{eq:pointobs} holds at every point $x_0\in\omega$.
\end{lemma}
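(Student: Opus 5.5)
The plan is to use the shifted variables of Lemma~\ref{lem:shift}, in which $z=u-C$ and $w=\psi-q_C=RF_\beta(z)$. These satisfy $z_t=-w_x$ and $w-w_{xx}=F_\beta(z)$ on $\T$, so the task is to show that $z$, $z_t$, $w$ and $w_x$ vanish on $\omega$ for every $t\in(0,T)$.

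The statements about $z$ are immediate. The hypothesis $u=C$ on $\omega\times(0,T)$ gives $z(t)=0$ on $\omega$ for each $t\in(0,T)$ (as an $H^1$ function, hence also for its continuous representative). Since $z\in C^1(\R;H^1(\T))$, the derivative $z_t(t)$ is the $H^1$-limit of difference quotients $(z(t+h)-z(t))/h$. Restriction to $\omega$ is continuous into $L^2(\omega)$, and for $t\in(0,T)$ and small $h$ these quotients vanish on $\omega$. Hence $z_t(t)=0$ on $\omega$.

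Next, the relation $z_t=-w_x$ gives $w_x(t)=0$ on $\omega$. Since $w(t)\in H^3\subset C^2$, $w(t)$ is constant on each connected component of $\omega$. On $\omega$ we also have $F_\beta(z)=F_\beta(0)=0$, and $w_{xx}=0$ there. The equation $w-w_{xx}=F_\beta(z)$ therefore forces $w(t)=0$ on $\omega$. This pointwise use of the elliptic equation is the step that replaces any support argument, and it is the only point needing care: both sides must be read as continuous functions, since $F_\beta(z)\in H^1\subset C$ and $w\in C^2$.

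Finally, take $x_0\in\omega$. Then $u(x_0,t)=C+z(x_0,t)=C$ and $\psi(x_0,t)=q_C+w(x_0,t)=C+C^2/2$ for all $t\in(0,T)$, which is \eqref{eq:pointobs}. I do not expect a real obstacle beyond justifying the passage from a.e.\ statements to statements about continuous representatives, and this is handled by the embeddings $H^1\subset C$ and $H^3\subset C^2$ recalled in Section~\ref{sec:flow}.
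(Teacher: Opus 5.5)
Your proposal is correct and follows essentially the same route as the paper: difference quotients give $z_t(t)=0$ on $\omega$, hence $w_x(t)=0$ there; then $w(t)\in H^3\subset C^2$ gives $w_{xx}(t)=0$ on $\omega$, and the identity $w=F_\beta(z)+w_{xx}$, read between continuous functions, gives $w(t)=0$ on $\omega$. The only cosmetic difference is that you pass to the limit through restriction into $L^2(\omega)$ and then use continuity, whereas the paper uses uniform convergence from the embedding $H^1(\T)\subset C(\T)$; both are valid.
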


\begin{proof}
Fix $t\in(0,T)$. By assumption $z(t)=0$ on $\omega$. For $|s|$ small, the difference quotients $(z(t+s)-z(t))/s$ vanish on $\omega$ and converge to $z_t(t)$ in $H^1(\T)$, hence uniformly on $\T$, thus $z_t(t)=0$ on $\omega$, and $w_x(t)=-z_t(t)=0$ on $\omega$. Let $J$ be a connected component of $\omega$ (an open arc, or $\T$ itself). Since $w(t)\in H^3(\T)\subset C^2(\T)$ and $w_x(t)=0$ on $J$, $w(t)$ is constant on $J$ and $w_{xx}(t)=0$ on $J$. The identity $w=F_\beta(z)+w_{xx}$ between continuous functions then gives $w(t)=F_\beta(0)=0$ on $J$. Finally, $u(x_0,t)=C$ and $\psi(x_0,t)=q_C$ mean $z(x_0,t)=0$ and $w(x_0,t)=0$.
\end{proof}

\begin{lemma}\label{lem:allt}
Assume \eqref{eq:pointobs}. Then $z(x_0,t)=w(x_0,t)=w_x(x_0,t)=0$ for every $t\in\R$.
\end{lemma}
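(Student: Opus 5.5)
The plan is to derive the three vanishing statements on $(0,T)$ and then extend them to all $t\in\R$ by analyticity in time. By Lemma~\ref{lem:shift}, the assumption \eqref{eq:pointobs} says exactly that
\[
z(x_0,t)=u(x_0,t)-C=0,\qquad w(x_0,t)=\psi(x_0,t)-q_C=0\qquad\text{for } t\in(0,T).
\]
So on $(0,T)$ the only missing statement is $w_x(x_0,t)=0$.

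To get it, I would use $z_t=-w_x$ from \eqref{eq:zw}. Since $z\in C^1(\R;H^1(\T))$, and point evaluation is a continuous linear functional on $H^1(\T)$, the scalar map $t\mapsto z(x_0,t)$ is $C^1$ with
\[
\frac{d}{dt}z(x_0,t)=z_t(x_0,t).
\]
The identity $z_t=-w_x$ holds in $H^1(\T)$ for each fixed $t$, and both sides are continuous functions (indeed $w\in H^3\subset C^2$), so it holds pointwise. Hence
\[
w_x(x_0,t)=-\frac{d}{dt}z(x_0,t)=0\qquad\text{for } t\in(0,T),
\]
because $z(x_0,\cdot)$ vanishes identically on that interval.

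To pass to all real times, I would invoke the final remark of Section~\ref{sec:flow}. There, the functions $t\mapsto z(x_0,t)$, $t\mapsto (RF_\beta(z(t)))(x_0)=w(x_0,t)$ and $t\mapsto \partial_x(RF_\beta(z(t)))(x_0)=w_x(x_0,t)$ are shown to be real analytic on $\R$. Each vanishes on the nonempty open interval $(0,T)$, so by the identity theorem for real analytic functions on the connected set $\R$ each vanishes for every $t\in\R$.

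There is no real obstacle here. The only point needing care is that pointwise evaluation commutes with time differentiation and with the identity $z_t=-w_x$. This follows from the continuity of point evaluations on $H^1(\T)$ and from the continuous representatives of $H^1$ and $H^3$ functions. Alternatively, $w_x(x_0,\cdot)=0$ on $(0,T)$ could be bypassed entirely: $w_x(x_0,t)=-\partial_t z(x_0,t)$ holds for all $t$, and $z(x_0,\cdot)\equiv0$ on $\R$ once analyticity has been applied to $z(x_0,\cdot)$ alone.
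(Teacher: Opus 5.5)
Your proof is correct and is essentially the paper's argument. The paper applies the identity theorem only to $t\mapsto z(x_0,t)$ and $t\mapsto w(x_0,t)$, then gets $w_x(x_0,t)=-\frac{d}{dt}z(x_0,t)=0$ for all $t\in\R$ directly, which is the alternative in your last sentence; your main ordering is equally valid, since Section~\ref{sec:flow} also records that $t\mapsto w_x(x_0,t)$ is real analytic.
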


\begin{proof}
By Section~\ref{sec:flow}, the functions $t\mapsto z(x_0,t)$ and $t\mapsto w(x_0,t)$ are real analytic on $\R$, and by \eqref{eq:pointobs} they vanish on $(0,T)$. By the identity theorem they vanish on $\R$. Since the evaluation at $x_0$ is a continuous linear functional on $H^1(\T)$ and $z\in C^1(\R;H^1(\T))$, we have $\frac{d}{dt}z(x_0,t)=z_t(x_0,t)$; hence $w_x(x_0,t)=-z_t(x_0,t)=0$ by \eqref{eq:zw}.
\end{proof}

We identify functions on $\T$ with $2\pi$-periodic functions on $\R$ and restrict them to the fundamental domain $[-\pi,\pi]$. On this interval the weight $e^{\alpha x}$ is smooth, but it does not extend to a continuous function on $\T$, this produces a flux term at the cut $x=\pm\pi$.

\begin{proposition}\label{prop:flux}
Let $u$, $C$, $z$ and $w$ be as in Lemma~\ref{lem:shift}. For $\alpha\in\R$ define
\[
 \widetilde N_\alpha(t)=\int_{-\pi}^{\pi}e^{\alpha x}\,\mathcal H_\beta\big(z(x,t)\big)\,dx,\qquad
 W(x,t)=e^{\alpha x/2}w(x,t)\quad(x\in[-\pi,\pi]).
\]
Then $\widetilde N_\alpha\in C^1(\R)$ and, for every $t\in\R$,
\begin{equation}\label{eq:fluxidentity}
 \widetilde N_\alpha'(t)=-\frac\alpha2\int_{-\pi}^{\pi}\Big(W_x^2+\Big(\frac{\alpha^2}{4}-1\Big)W^2\Big)dx
 +\sinh(\alpha\pi)\Big(w_x(\pi,t)^2+\Big(\frac{\alpha^2}{2}-1\Big)w(\pi,t)^2\Big).
\end{equation}
\end{proposition}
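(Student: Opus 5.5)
The plan is to repeat the computation of Lemma~\ref{lem:weighted} on the fixed interval $[-\pi,\pi]$, this time keeping every boundary term instead of discarding it by compact support. The boundary terms at $x=\pm\pi$ will not cancel, because $w$ is $2\pi$-periodic while the weight $e^{\alpha x}$ takes the different values $e^{\pm\alpha\pi}$ at the two ends.

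\emph{Regularity.} By Lemma~\ref{lem:shift}, $z\in C^1(\R;H^1(\T))$ and $w\in C(\R;H^3(\T))\subset C(\R;C^2(\T))$. The functional $z\mapsto\int_{-\pi}^{\pi}e^{\alpha x}\mathcal H_\beta(z)\,dx$ is a polynomial in $z$ with a bounded smooth weight on a bounded interval. Since $H^1$ embeds in $L^\infty$, this functional is $C^1$ on $H^1(\T)$, with derivative $h\mapsto\int_{-\pi}^{\pi} e^{\alpha x}F_\beta(z)h\,dx$. The chain rule then gives $\widetilde N_\alpha\in C^1(\R)$ and
\[
 \widetilde N_\alpha'(t)=\int_{-\pi}^{\pi}e^{\alpha x}F_\beta(z)z_t\,dx=-\int_{-\pi}^{\pi}e^{\alpha x}(w-w_{xx})w_x\,dx,
\]
where the second equality uses \eqref{eq:zw}. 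Since $w(t)\in C^2$, all the integrations by parts below are classical.

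\emph{Integration by parts.} Write $e^{\alpha x}ww_x=\tfrac12 e^{\alpha x}(w^2)_x$ and $e^{\alpha x}w_{xx}w_x=\tfrac12e^{\alpha x}(w_x^2)_x$ and integrate each by parts. This gives
\[
 \widetilde N_\alpha'=\frac\alpha2\int_{-\pi}^{\pi}e^{\alpha x}(w^2-w_x^2)\,dx+\Big[\tfrac12e^{\alpha x}(w_x^2-w^2)\Big]_{-\pi}^{\pi}.
\]
By periodicity, $w(\pi)=w(-\pi)$ and $w_x(\pi)=w_x(-\pi)$, so the bracket equals $\sinh(\alpha\pi)\,(w_x(\pi,t)^2-w(\pi,t)^2)$.

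\emph{Passing to $W$.} Use $e^{\alpha x/2}w_x=W_x-\tfrac\alpha2W$ as before. The cross term $-\alpha\int WW_x=-\tfrac\alpha2[W^2]_{-\pi}^{\pi}$ no longer vanishes: since $W(\pm\pi)^2=e^{\pm\alpha\pi}w(\pi)^2$, it equals $-\alpha\sinh(\alpha\pi)\,w(\pi)^2$. Hence
\[
 \int_{-\pi}^{\pi}e^{\alpha x}w_x^2\,dx=\int_{-\pi}^{\pi}\Big(W_x^2+\frac{\alpha^2}{4}W^2\Big)dx-\alpha\sinh(\alpha\pi)\,w(\pi)^2 .
\]
Multiplying by $-\alpha/2$ contributes the extra boundary term $\tfrac{\alpha^2}{2}\sinh(\alpha\pi)\,w(\pi)^2$. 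Adding this to the bracket from the previous step gives exactly $\sinh(\alpha\pi)\big(w_x^2+(\tfrac{\alpha^2}{2}-1)w^2\big)$ evaluated at $(\pi,t)$, and the interior terms combine into the integral in \eqref{eq:fluxidentity}.

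The only delicate step is the bookkeeping of the two boundary contributions: the one from the original integration by parts and the one hidden in the cross term $\int WW_x$. Each must be evaluated using periodicity of $w$ together with the non-periodic weight, so that the factors $e^{\pm\alpha\pi}$ combine into $\sinh(\alpha\pi)$ with the correct coefficients $1$ and $\tfrac{\alpha^2}{2}-1$.
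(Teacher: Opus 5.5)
Your proposal is correct and follows the paper's proof essentially step for step. You use the chain rule to get $\widetilde N_\alpha'=\int_{-\pi}^{\pi}e^{\alpha x}F_\beta(z)z_t\,dx$, integrate by parts with the periodic $w$ against the non-periodic weight to obtain $\sinh(\alpha\pi)\big(w_x(\pi,t)^2-w(\pi,t)^2\big)$, and then pass to $W=e^{\alpha x/2}w$, where the cross term $\int_{-\pi}^{\pi}WW_x\,dx=\sinh(\alpha\pi)\,w(\pi,t)^2$ supplies the extra $\tfrac{\alpha^2}{2}$ coefficient, so your bookkeeping yields exactly \eqref{eq:fluxidentity}.
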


\begin{proof} We will give the proof in three steps. 

\vspace{0.2cm}
\noindent \textbf{Step 1.} \emph{Time derivative.} 
\vspace{0.2cm}

For $f,g\in H^1(\T)$, $$\mathcal H_\beta(f+g)-\mathcal H_\beta(f)-F_\beta(f)g=\big(\frac\beta2+\frac f2\big)g^2+\frac{g^3}6,$$ so the functional $$f\mapsto\int_{-\pi}^{\pi}e^{\alpha x}\mathcal H_\beta(f)\,dx$$ is Fr\'echet differentiable on $H^1(\T)$, with derivative $$g\mapsto\int_{-\pi}^{\pi}e^{\alpha x}F_\beta(f)g\,dx$$ depending continuously on $f$. By the chain rule and \eqref{eq:zw}, $\widetilde N_\alpha\in C^1(\R)$ and
\[
 \widetilde N_\alpha'(t)=\int_{-\pi}^{\pi}e^{\alpha x}F_\beta(z)z_t\,dx
 =-\int_{-\pi}^{\pi}e^{\alpha x}ww_x\,dx+\int_{-\pi}^{\pi}e^{\alpha x}w_{xx}w_x\,dx .
\]
In the rest of the proof $t$ is fixed and omitted, and all integrals are over $[-\pi,\pi]$.

\vspace{0.2cm}
\noindent \textbf{Step 2.} \emph{Integration by parts.} 
\vspace{0.2cm}

The functions $w,w_x\in H^2(\T)\subset C^1(\T)$ are $2\pi$-periodic, so
$$\big[e^{\alpha x}f(x)\big]_{-\pi}^{\pi}=(e^{\alpha\pi}-e^{-\alpha\pi})f(\pi)=2\sinh(\alpha\pi)f(\pi),$$
for $f\in\{w^2,w_x^2\}$. Since $2ww_x=(w^2)_x$ and $2w_xw_{xx}=(w_x^2)_x$,
\begin{gather*}
 -\int e^{\alpha x}ww_x=-\sinh(\alpha\pi)\,w(\pi)^2+\frac\alpha2\int e^{\alpha x}w^2
 \end{gather*}
 and
 \begin{gather*}
 \int e^{\alpha x}w_{xx}w_x=\sinh(\alpha\pi)\,w_x(\pi)^2-\frac\alpha2\int e^{\alpha x}w_x^2 .
\end{gather*}
Hence
\begin{equation}\label{eq:flux0}
 \widetilde N_\alpha'=\frac\alpha2\int e^{\alpha x}\big(w^2-w_x^2\big)+\sinh(\alpha\pi)\big(w_x(\pi)^2-w(\pi)^2\big).
\end{equation}

\vspace{0.2cm}
\noindent \textbf{Step 3.}\emph{The conjugated function.} 
\vspace{0.2cm}

We have $e^{\alpha x}w^2=W^2$ and $e^{\alpha x/2}w_x=W_x-\frac\alpha2W$, so that
\begin{gather*}
 \int e^{\alpha x}w_x^2=\int W_x^2-\alpha\int WW_x+\frac{\alpha^2}4\int W^2
 \end{gather*}
 and
 \begin{gather*}
 \int WW_x=\frac12\big[W^2\big]_{-\pi}^{\pi}=\frac12\big(e^{\alpha\pi}-e^{-\alpha\pi}\big)w(\pi)^2=\sinh(\alpha\pi)\,w(\pi)^2,
\end{gather*}
by the periodicity of $w$. Inserting this in \eqref{eq:flux0},
\[
 \widetilde N_\alpha'=\frac\alpha2\int W^2-\frac\alpha2\int W_x^2+\frac{\alpha^2}2\sinh(\alpha\pi)\,w(\pi)^2
 -\frac{\alpha^3}8\int W^2+\sinh(\alpha\pi)\big(w_x(\pi)^2-w(\pi)^2\big),
\]
which is \eqref{eq:fluxidentity}.
\end{proof}

\begin{remark}
Let us give some remarks.
\begin{itemize}
\item[(i)] For $\alpha>2$, the first term in \eqref{eq:fluxidentity} is nonpositive and vanishes only if $w(t)=0$. Hence $\widetilde N_\alpha$ is nonincreasing as soon as the flux term vanishes, although its density $\mathcal H_\beta$ has no sign. Since the flux term is nonnegative for $\alpha\geq\sqrt2$, without an observation at the cut it works against this monotonicity.
\item[(ii)]  A smooth periodic weight cannot replace $e^{\alpha x}$ in this argument: for such a weight $\varphi$ the same computation gives $$\frac{d}{dt}\int_\T\varphi\,\mathcal H_\beta(z)\,dx=\frac12\int_\T\varphi_x\big(w^2-w_x^2\big)\,dx,$$ with no flux term, and since $$\int_\T\varphi_x\,dx=0$$ this quadratic form is indefinite unless $\varphi$ is constant (in which case one recovers the conservation of the Hamiltonian). The price for the non-periodic weight $e^{\alpha x}$ is the flux term, which involves only the two numbers $w(\pi,t)$ and $w_x(\pi,t)$.
\item[(iii)]  If $z$ vanishes near $\pm\pi$ for all $t$ in an open interval, then so do $w(t)$ and $w_x(t)$ for $t$ in that interval (Lemma~\ref{lem:strip}), and \eqref{eq:fluxidentity} reduces to the identity \eqref{eq:weighted} for the extension of $z$ by zero to the line.
\end{itemize}
\end{remark}

\begin{proof}[Proof of Theorem~\ref{thm:point}]
The equation commutes with translations, and $\psi$ is translated with $u$, hence we may assume that $x_0=\pi$ (the point $\pi\equiv-\pi$ of $\T$). Let $z$ and $w$ be as in Lemma~\ref{lem:shift}, fix $\alpha=3$, and write $\widetilde N=\widetilde N_3$. By Lemma~\ref{lem:allt}, for every $t\in\R$,
\begin{equation}\label{eq:cutvanish}
 z(\pi,t)=0,\qquad w(\pi,t)=0,\qquad w_x(\pi,t)=0 .
\end{equation}

\smallskip
\noindent\emph{\textbf{Step 1: monotonicity and integrability.}} 
\smallskip

By Proposition~\ref{prop:flux} and \eqref{eq:cutvanish},
\begin{equation}\label{eq:mono3}
 \widetilde N'(t)=-\frac32\int_{-\pi}^{\pi}\Big(W_x^2+\frac54W^2\Big)dx\leq0\qquad\text{for all }t\in\R .
\end{equation}
Let $\nu_1=\|z(0)\|_{H^1(\T)}$, so that $\|z(t)\|_{H^1}=\nu_1$ for all $t$ by \eqref{eq:zw}, and let $\nu_\infty=\sup_t\|z(t)\|_\infty$, which is finite by the embedding $H^1(\T)\subset C(\T)$. Since $$|\mathcal H_\beta(z)|\leq\left(\frac{|\beta|}2+\frac{\nu_\infty}6\right)z^2,$$ we get $$|\widetilde N(t)|\leq\widetilde N_{\max}:=e^{3\pi}\left(\frac{|\beta|}2+\frac{\nu_\infty}6\right)\nu_1^2,$$ for all $t$. Integrating \eqref{eq:mono3} over $[-r,r]$ and letting $r\to\infty$ gives $$\frac{15}8\int_\R\|W(t)\|_{L^2(-\pi,\pi)}^2\,dt\leq2\widetilde N_{\max}.$$ Since $w^2=e^{-3x}W^2\leq e^{3\pi}W^2$ on $[-\pi,\pi]$, it follows that $$\int_\R\|w(t)\|_{L^2(\T)}^2\,dt<\infty.$$ As $t\mapsto\|w(t)\|_2$ is continuous, there are sequences $t_n^+\to+\infty$ and $t_n^-\to-\infty$ with $\|w(t_n^\pm)\|_2\to0$.

\smallskip
\noindent\emph{\textbf{Step 2: compactness at both ends of time.}}
\smallskip

Let $(s_n)$ be a sequence of times with $\|w(s_n)\|_2\to0$. We claim that $z(s_n)\to0$ uniformly on $\T$, hence $\widetilde N(s_n)\to0$. By Section~\ref{sec:flow}, every subsequence of $(z(s_n))$ has a further subsequence converging uniformly to some $z_*\in C(\T)$. Along it, $$F_\beta(z(s_n))\to F_\beta(z_*)$$ uniformly, hence in $L^2(\T)$, and $$w(s_n)=RF_\beta(z(s_n))\to RF_\beta(z_*)$$ in $L^2(\T)$, because $R$ is bounded on $L^2$. Thus $RF_\beta(z_*)=0$, and $F_\beta(z_*)=0$ because $R$ is injective. In other words, the continuous function $z_*$ takes values in the set $\{0,-2\beta\}$, which has at most two points. Since $\T$ is connected, $z_*$ is constant, and $$z_*(\pi)=\lim z(s_n)(\pi)=0,$$ by \eqref{eq:cutvanish}. Hence $z_*=0$. Every subsequence having a further subsequence converging uniformly to $0$, the whole sequence $z(s_n)$ converges uniformly to $0$, and $$|\widetilde N(s_n)|\leq e^{3\pi}\big(\frac{|\beta|}2+\frac{\|z(s_n)\|_\infty}6\big)\|z(s_n)\|_2^2\to0.$$

\smallskip
\noindent\emph{\textbf{Step 3: conclusion.}}
\smallskip

Fix $t\in\R$. For $n$ large we have $t_n^-<t<t_n^+$, and since $\widetilde N$ is nonincreasing, $$\widetilde N(t_n^-)\geq\widetilde N(t)\geq\widetilde N(t_n^+).$$ By Step~2 both bounds tend to $0$, so that $\widetilde N(t)=0$. Hence $\widetilde N\equiv0$, $\widetilde N'\equiv0$, and \eqref{eq:mono3} gives $W(t)=0$, that is, $w(t)=0$, for every $t$. By \eqref{eq:zw}, $F_\beta(z(t))=w-w_{xx}=0$, and the argument of Step~2 shows that $z(t)$ is a constant belonging to $\{0,-2\beta\}$, with $z(\pi,t)=0$. Therefore $z\equiv0$, that is, $u\equiv C$.
\end{proof}

Combined with Lemma~\ref{lem:strip}, Theorem~\ref{thm:point} gives a second proof of Theorem~\ref{thm:main}.


\begin{remark}\label{rem:general}
Only three properties of the quadratic nonlinearity were used above:
\begin{enumerate}
\item $z\mapsto R\partial_xF_\beta(z)$ is analytic on $H^1(\T)$ (Section~\ref{sec:flow});
\item $F_\beta$ has an antiderivative $\mathcal H_\beta$ with $\mathcal H_\beta(0)=0$, which gives \eqref{eq:conservation} and the weighted identities \eqref{eq:weighted} and \eqref{eq:fluxidentity};
\item $0$ is an isolated zero of $F_\beta$ (the compactness steps).
\end{enumerate}
Let $f$ be a nonconstant real polynomial, and consider $$(1-\partial_x^2)u_t+\partial_xf(u)=0\qquad \text{on}\quad \T.$$ Put $z=u-C$, $F(z)=f(z+C)-f(C)$ and $\psi=Rf(u)$. All three properties then hold, with $$\mathcal H(z)=\int_0^zF,$$ the zero set of $F$ is finite and contains $0$. The proofs above therefore apply verbatim, with the weighted functionals built on $\mathcal H$ and with $w=\psi-f(C)=RF(z)$. So every real $H^1(\T)$ solution that equals a constant $C$ on a nonempty strip $\omega\times(0,T)$ is identically equal to $C$; more generally, $u(x_0,t)=C$ and $\psi(x_0,t)=f(C)$ for $t\in(0,T)$ imply $u\equiv C$. No sign condition on $f$ is needed. This contrasts with \cite[Theorem~4.1]{RosierZhang} and \cite{daSilvaFreire}, which require a one-signed flux but no analyticity. If $f$ is constant, stationary solutions show that the conclusion fails. For $f(u)=u$ one recovers a continuation result for the linearized equation; in that case the observation of $u$ alone at one point is sufficient (Proposition~\ref{thm:lin}).
\end{remark}


\begin{remark}\label{rem:level}
For $C=-1$ we have $\beta=0$ and $F_0(z)=z^2/2\geq0$: this is the case covered by \cite[Theorem~4.1]{RosierZhang}, whose proof uses the positivity of the flux through the exponential moment $$\int F_0(z)e^{x}\,dx.$$ In this case Theorem~\ref{thm:point} is in fact elementary: since $$\psi+\frac12=R\big(\frac12(1+u)^2\big)$$ and $G>0$, the condition $\psi(x_0,t)=-\frac12$ at a single time $t$ forces $u(t)\equiv-1$, hence $u\equiv-1$ by uniqueness. On the other hand, the equation $$(1-\partial_x^2)\tilde u_t+\partial_x(\frac12\tilde u^2)=0$$ satisfied by $\tilde u=1+u$ is invariant under $$\tilde u(x,t)\mapsto-\tilde u(2x_0-x,t),$$ equivalently, \eqref{eq:bbm} is invariant under $$u(x,t)\mapsto-2-u(2x_0-x,t).$$ Hence, if $1+u_0$ is odd with respect to $x_0$ and not identically zero, for instance $$u_0(x)=-1+\sin(x-x_0),$$ and uniqueness shows that $1+u(t)$ is odd with respect to $x_0$ for all $t$, so that $u(x_0,t)=-1$ for all $t$, while $u$ is not constant. Thus the second condition in \eqref{eq:pointobs} cannot be dropped at the level $C=-1$. For $\beta\neq0$, $F_\beta$ changes sign, and the weighted Hamiltonian uses no sign condition at all.
\end{remark}

\section{Linear continuation by a meromorphic generating series}\label{sec:linear}

On the complexification of $X$, define
\begin{equation*}
 L=-R\partial_x,\qquad S(t)=e^{tL},\qquad
 \lambda_k=-\frac{ik}{1+k^2}\quad(k\in\Z).
\end{equation*}
The operator $L$ is bounded, compact, and skew-adjoint on $H^s(\T)$ for every $s\in\R$. Thus $S(t)$ is unitary for real $t$, while $S(\zeta)$ is entire with norm at most $e^{|\operatorname{Im}\zeta|/2}$. The eigenvalues are simple: equality for indices $j,k$ implies $(j-k)(1-jk)=0$, and distinct integers cannot have product $1$. This simplicity depends on the stated period and normalization.

\begin{proof}[Proof of Proposition \ref{thm:lin}]
Fix a point $x_0$, and expand $f$ in its Fourier series $\sum_{k\in\Z}\widehat f(k)e^{ikx}$. Since $f\in H^1$, the coefficients $b_k=\widehat f(k)e^{ikx_0}$ are absolutely summable. Hence the function
$g(\zeta)=S(\zeta)f(x_0)$, which equals $\sum_k b_ke^{\lambda_k\zeta}$, is entire. Vanishing on a real interval implies $g\equiv0$, so
$m_n=g^{(n)}(0)=\sum_k b_k\lambda_k^n=0$ for every $n\geq0$. For $|\eta|<2$, absolute convergence gives the ordinary generating series
\begin{equation}\label{eq:meromorphic}
 \sum_{n=0}^\infty m_n\eta^n
 =\sum_{k\in\Z}\frac{b_k}{1-\lambda_k\eta}.
\end{equation}
There is no factorial in this series. Its convergence near zero follows from $|\lambda_k|\leq1/2$ and $\sum_k|b_k|<\infty$.

For $k\ne0$, the poles $p_k=1/\lambda_k$ are distinct and tend to infinity as $|k|\to\infty$. On any compact set avoiding these poles, the tail satisfies $|\lambda_k\eta|\leq1/2$, so it is normally convergent. Thus the right side of \eqref{eq:meromorphic} is meromorphic on $\C$ and zero on its pole complement by analytic continuation. Its residue at $p_k$ is $-b_k/\lambda_k$, whence $b_k=0$ for $k\ne0$. Finally $m_0=0$ gives $b_0=0$. This proves point continuation. Strip continuation follows by choosing any point inside the observed interval, using the continuous spatial representative.
\end{proof}

The same point proof works for $H^s$, $s>1/2$. Strip continuation also holds for $L^2$ data: pair the solution with a smooth function supported in the observed interval. The resulting Fourier coefficients are summable, and the residue argument recovers each coefficient times the corresponding test-function coefficient. Choosing a suitable test function for each Fourier index gives $f=0$.

\section{Observability, control, and linear stabilization}\label{sec:control}

\subsection{The energy duality and fixed actuation}

Use the Hilbert norm on $H^{-1}$ for which $$R:H^{-1}\to X$$ is an isometry. If the physical force is $a_th$, set $r=Rh$. The abstract equation is
\begin{equation}\label{eq:abstract}
\begin{cases}
   &u_t=Lu+B(t)r,\\
  &B(t)=Ra_tR^{-1},\\ 
  &B(t)^*v=a_tv,
  \end{cases}
\end{equation}
where the adjoint is taken in $X$. The last identity follows from
$$\langle B(t)r,v\rangle_X=\langle R^{-1}r,a_tv\rangle_{H^{-1},H^1}.$$
Smooth multiplication bounds $B(t)$ uniformly. The endpoint operator and its adjoint are
\begin{equation}\label{eq:endpoint}
 \begin{cases}
 &K_Tr=\int_0^T S(T-t)B(t)r(t)\,dt,\\
 &(K_T^*p)(t)=a_tS(t-T)p.
 \end{cases}
\end{equation}
For fixed $a_t=a$, Proposition \ref{thm:lin} gives $\ker K_T^*=\{0\}$, so $\Ran K_T$ is dense in $X$.

If $a$ has no zero, then $B=RaR^{-1}$ is invertible on $X$, and for $p\in X$ the control $$r(t)=T^{-1}B^{-1}S(t-T)p$$ gives $K_Tr=p$, thus $K_T$ is onto for every $T>0$.

Suppose now that $a(x_*)=0$ for some $x_*\in\T$. Fix a nonzero $\varphi\in C_c^\infty(-1,1)$ and, for large $n$, let $f_n=\varphi_n/\|\varphi_n\|_X$, where $\varphi_n(x)=\varphi(n(x-x_*))$ near $x_*$ and $\varphi_n=0$ elsewhere. Then $\|f_n\|_X=1$, $\|\partial_xf_n\|_2\leq1$, and $$\|f_n\|_2\leq n^{-1}\|\varphi\|_2/\|\varphi'\|_2\to0.$$ The symbol of $L$ gives
\begin{equation}\label{eq:Lsmoothing}
 \|Lf\|_X\leq\|f\|_2\qquad(f\in X),
\end{equation}
and hence
\begin{equation*}
 \sup_{|t|\leq T}\|(S(t)-I)f_n\|_X
 \leq T\|Lf_n\|_X\leq T\|f_n\|_2\longrightarrow0.
\end{equation*}
Since $f_n$ is supported in $\{|x-x_*|<1/n\}$ and $a(x_*)=0$,
\[
 \|af_n\|_X\leq\big(\|a\|_\infty+\|a'\|_\infty\big)\|f_n\|_2
 +\sup_{|x-x_*|\leq1/n}|a(x)|\longrightarrow0.
\]
Since multiplication by $a$ is bounded on $X$, it follows that $$(K_T^*S(T)f_n)(t)=aS(t)f_n$$ tends to zero in $L^2(0,T;X)$, while $\|S(T)f_n\|_X=1$. A bounded surjective operator between Hilbert spaces has an adjoint bounded below, so \eqref{eq:endpoint} cannot be surjective. Therefore, exact controllability fails. Null controllability also fails, since $S(T)$ is onto and null controllability would require $\Ran K_T$ to contain $S(T)X=X$. This proves the fixed-control assertions of Proposition \ref{thm:lincontrol}.

The forcing norm matters. With $h\in L^2(0,T;L^2)$, the endpoint map into $H^1$ is compact, so $$Ra_t:L^2\to H^2$$ is uniformly bounded, and $S(t)$ is bounded on $H^2$. The endpoint therefore maps bounded sets into bounded subsets of $H^2$, compactly embedded in $H^1$. Exact control in $H^1$ is impossible in that smaller forcing space, even with full spatial actuation.

\subsection{Moving observability}

We prove \eqref{eq:obs} under \eqref{eq:covering}. Since $$a_tf_x\text{ equals }
(a_tf)_x-(a_t)_xf,$$ coverage gives
\[
 \kappa_T\|f_x\|_2^2
 \leq2\int_0^T\|(a_tf)_x\|_2^2\,dt+2T\|a'\|_\infty^2\|f\|_2^2.
\]
By \eqref{eq:Lsmoothing},
$\|(S(t)-I)f\|_X\leq T\|f\|_2$ on $[0,T]$. Uniform multiplication bounds yield
\begin{equation}\label{eq:compactobs}
 \|f\|_X^2\leq C_T\left(
 \int_0^T\|a_tS(t)f\|_X^2\,dt+\|f\|_2^2\right).
\end{equation}
If \eqref{eq:obs} failed, there would be $f_n$ with unit $X$ norm and observations tending to zero. After extraction, $f_n$ converges weakly in $X$ and strongly in $L^2$. Applying \eqref{eq:compactobs} to differences gives strong convergence in $X$ to some $f$ of norm one, with $a_tS(t)f=0$. The nonempty open set $\{(x,t):0<t<T,\ a(x+ct)\ne0\}$ contains a space--time rectangle $\omega'\times(t_1,t_2)$. Proposition \ref{thm:lin}, applied to $S(t_1)f$, then gives $f=0$, a contradiction.

Replacing $f$ by $S(-T)p$ in \eqref{eq:obs} proves coercivity of $K_TK_T^*$. Consequently
\begin{equation*}
 P_T=K_T^*(K_TK_T^*)^{-1}:X\longrightarrow L^2(0,T;X)
\end{equation*}
is a bounded right inverse of $K_T$. Taking
$r=P_T(u_1-S(T)u_0)$ and $h=R^{-1}r$ proves the remaining part of Proposition \ref{thm:lincontrol}. These arguments preserve real-valuedness.

\subsection{Proof of linear stabilization}

For the fixed feedback, set $A=L-Ra^2$. This is a bounded compact operator on $X$, and its semigroup is contractive because
\begin{equation}\label{eq:fixedenergy}
 \frac12\frac{d}{dt}\|e^{tA}f\|_X^2
 =-\|ae^{tA}f\|_2^2.
\end{equation}
We first show weak convergence to zero. If $e^{t_nA}f\rightharpoonup v_0$ in $X$ for $t_n\to\infty$, then, on every bounded time interval, $e^{(t_n+t)A}f$ converges weakly to $e^{tA}v_0$. The tail of the integrable dissipation in \eqref{eq:fixedenergy} tends to zero. Weak lower semicontinuity gives $ae^{tA}v_0=0$. Thus this limiting trajectory solves the undamped linear equation \eqref{eq:linbbm} and vanishes on a strip, thus Proposition \ref{thm:lin} gives $v_0=0$. Every weak cluster point is zero, proving weak convergence of the trajectory.

The range of $A$ is dense. Indeed, $$A^*=-L-Ra^2,$$ if $A^*g=0$, its real energy pairing with $g$ gives $ag=0$, and then $Lg=0$. Hence $g$ is constant and, since $a\ne0$, $g=0$. For $g\in X$, compactness and commutation now give
\[
 e^{tA}Ag=Ae^{tA}g\longrightarrow0\quad\hbox{strongly in }X.
\]
Density of $\Ran A$ and contractivity imply strong convergence for every initial state. On the other hand, for any orthonormal sequence $g_n$ in $X$, compactness gives $Ag_n\to0$, and
\[
 \|(e^{tA}-I)g_n\|_X\leq t\|Ag_n\|_X\longrightarrow0.
\]
It follows that $\|e^{tA}\|_{X\to X}=1$ for every finite $t$. This proves the fixed part of Proposition \ref{thm:linstab}.

For the moving feedback, let $\Gamma(t)v=a_tv$, so that $\Gamma(t)=B(t)^*$ by \eqref{eq:abstract}. Equation \eqref{eq:linmovingdamp} becomes $u_t=Lu-\Gamma(t)^*\Gamma(t)u$, and
\begin{equation}\label{eq:movingenergy}
 \frac12\frac{d}{dt}\|u(t)\|_X^2=-\|\Gamma(t)u(t)\|_X^2.
\end{equation}
Choose $\tau=2\pi/|c|$. Observability on every interval $[s,s+\tau]$ has a constant independent of $s$: spatial translation conjugates $a(x+c(s+t))$ to $a(x+ct)$ and commutes with $S(t)$. If $v(t)=S(t-s)u(s)$, Duhamel's formula (note $\Gamma(t)^*=B(t)$ is uniformly bounded) gives
\[
 \sup_{s\leq t\leq s+\tau}\|v(t)-u(t)\|_X
 \leq \sup_t\|\Gamma(t)\|\sqrt\tau
 \left(\int_s^{s+\tau}\|\Gamma(t)u(t)\|_X^2dt\right)^{1/2}.
\]
Applying observability to $v$, followed by this bound, gives a constant $M_0\geq4$, independent of $s$, such that
\[
 \|u(s)\|_X^2\leq M_0\int_s^{s+\tau}\|\Gamma(t)u(t)\|_X^2dt.
\]
Together with \eqref{eq:movingenergy}, this yields
$$\|u(s+\tau)\|_X^2\leq\left(1-\frac{2}{M_0}\right)\|u(s)\|_X^2.$$ Iteration and contractivity between successive times prove the uniform exponential estimate.

\subsection{Local nonlinear moving control}

\begin{proof}[Proof of Proposition \ref{thm:nlcontrol}]
Let $Y=C([0,T];X)$ and define
\[
 Q(v)(t)=-\frac12\int_0^t S(t-s)R\partial_x(v(s)^2)\,ds.
\]
The algebra property gives
\begin{equation}\label{eq:quadratic}
\begin{cases}
& \|Q(v_1)-Q(v_2)\|_Y
 \leq C_Q(\|v_1\|_Y+\|v_2\|_Y)\|v_1-v_2\|_Y,
\\
 &\|Q(v)\|_Y\leq C_Q\|v\|_Y^2.
\end{cases}
\end{equation}
For $v\in Y$ choose the abstract control
$r_v=P_T(u_1-S(T)u_0-Q(v)(T))$, and set
\[
 \Phi(v)(t)=S(t)u_0+
 \int_0^t S(t-s)B(s)r_v(s)\,ds+Q(v)(t).
\]
By construction $\Phi(v)(0)=u_0$ and $\Phi(v)(T)=u_1$. Boundedness of $P_T$, uniform boundedness of $B$, and \eqref{eq:quadratic} imply
\[
 \|\Phi(v)\|_Y\leq C\big(\|u_0\|_X+\|u_1\|_X+\|v\|_Y^2\big),
\]
and the Lipschitz constant on a ball of radius $\rho$ is at most $C'\rho$. Choose $\rho$ proportional to $\|u_0\|_X+\|u_1\|_X$ and then take this sum sufficiently small. The map preserves the ball and is a strict contraction. Its fixed point $u$ solves \eqref{eq:nlmoving} in the mild sense with $h=R^{-1}r_u$. The same estimates give the stated state and control bound. For zero endpoints the zero solution and control suffice.
\end{proof}

The change of coordinates $v(x,t)=u(x-ct,t)$ converts \eqref{eq:nlmoving} into
\[
 v_t-v_{xxt}-cv_{xxx}+(1+c)v_x+vv_x=a(x)h(x-ct,t),
\]
the moving-frame KdV--BBM formulation used in \cite{RosierZhang}. Their global exact-control and semiglobal nonlinear stabilization theorems require further nonlinear arguments. Conversely, failure of a bounded linear right inverse for a fixed actuator with a gap prevents the preceding perturbation proof at zero, so it does not by itself prove general nonlinear uncontrollability.

\section{Nonlinear weak stabilization under fixed damping}\label{sec:nonlineardamping}

\begin{proof}[Proof of Theorem \ref{thm:nldamp}]
The operators $RD_j$, for $j=0,1$, are bounded on $H^1$, so \eqref{eq:nldamp} is a locally Lipschitz ODE there. The energy identity is
\begin{equation}\label{eq:nldampenergy}
 \frac12\frac{d}{dt}\|u(t)\|_{H^1}^2
 =-\|a\partial_x^ju(t)\|_2^2.
\end{equation}
For $j=1$ the pairing is between $H^{-1}$ and $H^1$. This identity gives global existence and a uniform $H^1$ bound; the equation then bounds $u_t$ uniformly in $H^1$. Integration in space conserves the mean for $j=1$.

We follow the compactness argument of \cite[Corollary~4.2]{RosierZhang} and \cite{Rosier2016}. Let $t_n\to\infty$ with $u(t_n)\rightharpoonup v_0$ in $H^1$, and choose $1/2<\sigma<1$. Compact Sobolev embedding and the derivative bound give, after extraction,
\begin{equation*}
 \begin{cases}
 u(t_n+\cdot)\longrightarrow v
       &\text{in }C([0,1];H^\sigma),\\
 u(t_n+\cdot)\rightharpoonup v
       &\text{in }L^2(0,1;H^1).
 \end{cases}
\end{equation*}
Both $RD_j$ are bounded on $H^\sigma$, and $H^\sigma$ is an algebra. Passing to the integral equation identifies $v$ as the damped solution with initial value $v_0$. Uniqueness of that ODE in $H^\sigma$ identifies it with the $H^1$ solution. The dissipation in \eqref{eq:nldampenergy} is integrable on $[0,\infty)$; hence its integral on $[t_n,t_n+1]$ tends to zero. Weak lower semicontinuity yields $a\partial_x^jv=0$ on $\T\times(0,1)$. Thus $D_jv=0$ and $v$ solves the conservative equation on this interval.

Let $\bar v\in C(\R;H^1)$ be the solution of the conservative equation \eqref{eq:bbm} with $\bar v(0)=v_0$. Since $D_jv=0$ on $[0,1]$, uniqueness for \eqref{eq:bbm} gives $\bar v=v$ on $[0,1]$, so Theorem \ref{thm:main} may be applied to $\bar v$. For $j=0$, the limit vanishes on a strip where $a\ne0$, and Theorem \ref{thm:main} gives $\bar v\equiv0$, hence $v_0=0$. For $j=1$, we have $v_x=0$ on such a strip. There $\partial_x(v+v^2/2)=0$ and $v_{xxt}=0$ in distributions, so the undamped equation \eqref{eq:bbm} gives $v_t=0$. Hence $v$ equals one constant $C$ on a space--time rectangle. The constant-value conclusion of Theorem \ref{thm:main}, applied to $\bar v$, gives $v_0\equiv C$. Since $u(t_n)\rightharpoonup v_0$ in $H^1$ and the mean of $u(t)$ is conserved, $C=m_1$.

Every weak cluster point of the bounded trajectory is therefore $m_j$. Weak sequential compactness gives weak convergence of the entire trajectory in $H^1$, and compact embedding gives strong convergence in each $H^s$, $s<1$.
\end{proof}

Strong nonlinear $H^1$ attraction and quantitative decay remain beyond this argument. The linear density-and-commutation proof does not transfer to a nonlinear flow, and qualitative continuation does not remove the fixed-actuator observability obstruction.

\subsection{Boundary feedback}
Let $\ell>0$ and $\mu_0,\mu_\ell\in\R$. Rosier \cite[Section~2.3]{Rosier2016} considers the BBM equation on the interval $(0,\ell)$ with nonhomogeneous boundary conditions of feedback type,
\begin{equation}\label{eq:boundarysys}
 \begin{cases}
 u_t-u_{txx}+u_x+uu_x=0&0<x<\ell,\ t\geq0,\\
 u_{tx}(0,t)=\mu_0u(0,t)+\tfrac13u(0,t)^2,& t\geq0,\\
 u_{tx}(\ell,t)=\mu_\ell u(\ell,t)+\tfrac13u(\ell,t)^2,& t\geq0,\\
 \\ u(x,0)=u_0,&0<x<\ell.
 \end{cases}
\end{equation}
Multiplying by $u$ gives, at least formally,
\[
 \frac12\frac{d}{dt}\|u(t)\|_{H^1(0,\ell)}^2=\Big(\mu_\ell-\frac12\Big)u(\ell,t)^2+\Big(\frac12-\mu_0\Big)u(0,t)^2,
\]
so that the $H^1$ norm is nonincreasing when $\mu_0>\frac12>\mu_\ell$. For $u_0\in H^1(0,\ell)$, \cite[Theorem~4]{Rosier2016} provides a unique global solution $u\in C([0,\infty);H^1(0,\ell))$, and it shows that $u(t)\to0$ weakly in $H^1(0,\ell)$ provided the unique continuation conjecture holds. The limiting trajectories are not periodic, so Theorem~\ref{thm:main} does not apply to them directly, but Proposition~\ref{prop:liouville} does.

\begin{corollary}[Boundary feedback]\label{cor:boundary}
Let $\ell>0$, $\mu_0>\frac12>\mu_\ell$, and $u_0\in H^1(0,\ell)$. Then the solution of \eqref{eq:boundarysys} satisfies, as $t\to\infty$,
\[
 u(t)\rightharpoonup0\ \text{ weakly in }H^1(0,\ell),\qquad
 \|u(t)\|_{H^s(0,\ell)}\to0\quad(s<1).
\]
There is no size restriction on $u_0$.
\end{corollary}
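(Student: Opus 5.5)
We follow the compactness--uniqueness scheme of the proof of Theorem~\ref{thm:nldamp}, with Proposition~\ref{prop:liouville} replacing Theorem~\ref{thm:main}. Take the global solution of \eqref{eq:boundarysys} given by \cite[Theorem~4]{Rosier2016}. Since $\mu_0>\frac12>\mu_\ell$, the energy identity makes $\|u(t)\|_{H^1(0,\ell)}$ nonincreasing. Integrating it in time shows that the boundary traces $u(0,\cdot)$ and $u(\ell,\cdot)$ lie in $L^2(0,\infty)$. Next we need a uniform bound on $u_t$ in $H^1(0,\ell)$, or at least in some $H^\sigma$ with $\sigma<1$. Writing $u_t=-(1-\partial_x^2)^{-1}$ of $\partial_x(u+u^2/2)$ with Neumann data $u_{tx}(0)$, $u_{tx}(\ell)$ expressed through the traces, this bound follows from the uniform $H^1$ and $L^\infty$ bounds.

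Now let $t_n\to\infty$ with $u(t_n)\rightharpoonup v_0$ in $H^1(0,\ell)$, and fix $\frac12<\sigma<1$. By Aubin--Lions, after extraction $u(t_n+\cdot)\to v$ in $C([0,1];H^\sigma(0,\ell))$. Passing to the limit in Rosier's integral formulation, which is continuous in $H^\sigma$ since the traces are, shows that $v$ solves \eqref{eq:boundarysys} with $v(0)=v_0$. The traces tend to zero in $L^2(t_n,t_n+1)$, so $v(0,t)=v(\ell,t)=0$ on $(0,1)$. The boundary conditions then give $v_{tx}(0,t)=v_{tx}(\ell,t)=0$. Integrating in time from the limiting relation $v_x(0,t)=v_x(0,0)+\int_0^t v_{tx}$ shows that $v_x(0,\cdot)$ and $v_x(\ell,\cdot)$ are constant in $t$.

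The key step is to extend $v$ by zero outside $[0,\ell]$ and check that the extension $\tilde v$ solves \eqref{eq:beta} with $\beta=1$ on the line. The function $v$ vanishes at the endpoints, so $\tilde v\in H^1(\R)$. In distributions, $(1-\partial_x^2)\tilde v_t+\partial_x F_1(\tilde v)$ then equals boundary terms of the form $v_{tx}(0)\delta_0-v_{tx}(\ell)\delta_\ell$, together with terms in $v_t(0)$, $v_t(\ell)$ and $F_1(v)$ at the endpoints. All of these vanish because $v_t=0$ and $F_1(v)=0$ on the boundary and $v_{tx}=0$ there. So $\tilde v$ is supported in $[0,\ell]$ for $t\in(0,1)$, and extending it by the flow of \eqref{eq:beta}, Proposition~\ref{prop:liouville} yields $\tilde v\equiv0$. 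In particular $v_0=0$.

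Every weak cluster point of the bounded trajectory is therefore $0$, which gives weak convergence in $H^1(0,\ell)$. Compact embedding upgrades this to strong convergence in $H^s(0,\ell)$ for $s<1$.

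The main obstacle is the regularity bookkeeping at the boundary. One has to make sure the limit $v$ is regular enough for the traces $v_{tx}(0)$, $v_{tx}(\ell)$ to make sense, and that the vanishing of boundary data indeed passes to the limit. I would handle this by working directly with Rosier's representation $u_t=\mathcal{K}(u)$, where $\mathcal{K}$ maps $H^\sigma$ continuously into $H^1$, through an explicit Neumann resolvent plus boundary correctors. Convergence of $u_t(t_n+\cdot)$ in $C([0,1];H^\sigma)$ then follows, and with it convergence of all the trace terms.
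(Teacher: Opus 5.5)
Your proposal is correct and follows the paper's proof. A weak limit point $v_0$ generates a trajectory $v$ whose boundary traces vanish, so $v_t=v_{tx}=F_1(v)=0$ at both endpoints. The extension by zero then solves the equation on the line with support in $[0,\ell]$, and Proposition~\ref{prop:liouville} gives $v_0=0$.

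The only difference is that you carry out the Aubin--Lions compactness and the passage to the limit yourself, as in the proof of Theorem~\ref{thm:nldamp}, whereas the paper cites Rosier's (2.29) and (2.31). Your observation that $v_x(0,\cdot)$ and $v_x(\ell,\cdot)$ are constant in time is not used and can be dropped.
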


\begin{proof}
We follow the proof of \cite[Theorem~4]{Rosier2016}, in which the conjecture is used only in the last step. Let $t_n\to\infty$ with $u(t_n)\rightharpoonup v_0$ in $H^1(0,\ell)$, fix $T>0$, and let $v\in C([0,T];H^1(0,\ell))$ be the solution of \eqref{eq:boundarysys} with initial value $v_0$. By \cite[(2.29)]{Rosier2016}, $v_t\in C([0,T];H^2(0,\ell))$. The energy identity and the continuity of the flow give, as in \cite[(2.31)]{Rosier2016},
\[
 \Big(\frac12-\mu_\ell\Big)\int_0^T v(\ell,t)^2\,dt+\Big(\mu_0-\frac12\Big)\int_0^Tv(0,t)^2\,dt=0 .
\]
Hence $v(0,t)=v(\ell,t)=0$ for $t\in[0,T]$. Consequently $v_t(0,t)=v_t(\ell,t)=0$, since point evaluations are continuous on $H^1(0,\ell)$, and the boundary conditions give $v_{tx}(0,t)=v_{tx}(\ell,t)=0$. Let $\widetilde v$ be the extension of $v$ by zero to $\R$. Since $v$ vanishes at the endpoints, $\widetilde v\in C([0,T];H^1(\R))$. Moreover, $\widetilde v$ solves \eqref{eq:bbm} in $\mathcal D'(\R\times(0,T))$. Indeed, when the equation, which holds almost everywhere in $(0,\ell)\times(0,T)$, is integrated against a test function, every boundary term produced at $x=0$ or $x=\ell$ contains one of the factors $v_{tx}$, $v_t$ or $v+v^2/2$ evaluated at that endpoint, and these vanish. By Section~\ref{sec:flow} (with $\beta=1$), $\widetilde v$ is, on $[0,T]$, a trajectory of the global $H^1(\R)$ flow of \eqref{eq:beta}. Let $z\in C(\R;H^1(\R))$ be this trajectory. Then $z$ is supported in $[0,\ell]$ for $t\in(0,T)$, and Proposition~\ref{prop:liouville} gives $z\equiv0$. In particular $v_0=0$. Every weak limit point of the bounded trajectory $(u(t))_{t\geq0}$ in $H^1(0,\ell)$ is therefore zero, which proves the weak convergence; the compact embedding $H^1(0,\ell)\subset H^s(0,\ell)$ for $s<1$ gives the strong convergence.
\end{proof}

The extension by zero in the proof of \cite[Theorem~4]{Rosier2016} produces a solution on the line which is supported in $[0,\ell]$ during a time interval. For every $\ell>0$, this is exactly the situation of Proposition~\ref{prop:liouville}, and no comparison between $\ell$ and the period $2\pi$ is needed.

\section{Further remarks and open questions}\label{sec:remarks}

\begin{remark}[A quadratic virial functional]\label{rem:virial}
Replacing the weight $e^{\alpha x}$ by $x$ and the Hamiltonian density by the indefinite density $u^2-u_x^2$ gives another exact identity on the fundamental domain: for $$V(t)=\frac12\int_{-\pi}^{\pi}x\,(u^2-u_x^2)\,dx$$ and $q$, $\psi$ as in \eqref{eq:defpsi},
\[
 V'(t)=\frac12\|\Lambda q\|_2^2+\mathcal E(u)+2\pi\Big(u(\pi)\psi_{xx}(\pi)-\psi(\pi)^2+\psi_x(\pi)^2+\frac12u(\pi)^2+\frac13u(\pi)^3\Big),
\]
where $\Lambda=I-2R$ is the Fourier multiplier with symbol $(k^2-1)/(k^2+1)$ and
$$\mathcal E(u)=-\frac13\int_\T u^3-\frac18\int_\T u^4+\frac12\int_\T u^2\psi+\int_{-\pi}^{\pi}x\,u^2\psi_x.$$ The linear part is a Kato-type monotonicity formula \cite{Kato}: the symbol $1-\xi^2$ of the density has the sign of the group velocity $$\frac{d}{d\xi}\frac{\xi}{1+\xi^2}=\frac{1-\xi^2}{(1+\xi^2)^2}$$ of the linearized equation. The kernel of $\Lambda$ consists of the modes $\pm1$, which are controlled by the two conditions $\psi(\pi)=\psi_x(\pi)=0$. However, in our estimates the cubic and quartic remainder $\mathcal E(u)$ can be absorbed only for small data. For $C=0$ and $\|u_0\|_{H^1}$ small, this gives $V'\geq c\|u\|_2^2$ with $c>0$ under \eqref{eq:pointobs} with $x_0=\pi$ (hence, by Lemma~\ref{lem:allt}, for all $t$), and the conservation of $$\int_\T(3u^2+u^3)\,dx$$ keeps $\|u(t)\|_2$ away from zero if $u_0\neq0$, which contradicts the boundedness of $V$. This yields Theorem~\ref{thm:point} with $C=0$ for small data only, thus the exponentially weighted Hamiltonian avoids any smallness.
\end{remark}

\begin{remark}[Open questions]
We have below several open problems:
\begin{itemize}
\item[(i)] Does the observation of $u$ alone at one point suffice for \eqref{eq:bbm} at the levels $C\neq-1$, in particular at $C=0$? It does for the linearized equation (Proposition~\ref{thm:lin}), and it does not at the level $C=-1$ (Remark~\ref{rem:level}).
\item[(ii)] Does Theorem~\ref{thm:main} hold in $H^s(\T)$ for $1/2\leq s<1$? It fails for $0\leq s<1/2$, even for small data, by the step profiles \eqref{eq:step}. For $1/2<s<1$ the solutions are still continuous in $x$, but the uniform bounds and the compactness at both ends of time used above rest on the conservation of the $H^1$ norm.
\end{itemize}
\end{remark}

\begin{remark}[Numerical checks]\label{rem:numerics}
The identity \eqref{eq:fluxidentity} was checked numerically for random real trigonometric polynomials $u$ (neither small nor vanishing near the cut), several levels $C$ and several values of $\alpha$, with Gauss--Legendre quadrature on $[-\pi,\pi]$ (relative agreement to at least $10^{-10}$ in double precision and to about $10^{-39}$ in $40$-digit arithmetic), and along numerically computed solutions of \eqref{eq:bbm} (pseudo-spectral discretization), by comparing $\widetilde N_\alpha(t)-\widetilde N_\alpha(0)$ with the time integral of the right-hand side of \eqref{eq:fluxidentity}. The identity of Remark~\ref{rem:virial} and the symmetry of Remark~\ref{rem:level} were checked in the same way, and the dichotomy of Proposition~\ref{thm:lincontrol} between a fixed actuator with a zero and a moving actuator was illustrated through the smallest eigenvalues of truncated observability Gramians. The Python scripts are available from the authors upon request. These computations corroborate, but do not replace, the proofs.
\end{remark}

\subsection*{Acknowledgments}
This work began during visits of the third author to the Departamento de Matem\'atica of the Universidade Federal de Pernambuco (UFPE) in 2025, and he thanks the department for its hospitality. The second author thanks the Laboratoire Analyse, G\'eom\'etrie et Applications (LAGA), Universit\'e Sorbonne Paris Nord, where he spent a year as a postdoctoral researcher, for its hospitality.


\begin{thebibliography}{99}
\bibitem{BautistaPazoto}
G. J. Bautista and A. F. Pazoto,
\href{https://doi.org/10.1007/s10884-018-9689-4}{\emph{Large-time behavior of a linear Boussinesq system for the water waves}},
J. Dynam. Differential Equations \textbf{31} (2019), 959--978.

\bibitem{BBM}
T. B. Benjamin, J. L. Bona, and J. J. Mahony,
\href{https://doi.org/10.1098/rsta.1972.0032}{\emph{Model equations for long waves in nonlinear dispersive systems}},
Philos. Trans. Roy. Soc. London Ser. A \textbf{272} (1972), 47--78.

\bibitem{BonaTzvetkov}
J. L. Bona and N. Tzvetkov,
\href{https://doi.org/10.3934/dcds.2009.23.1241}{\emph{Sharp well-posedness results for the BBM equation}},
Discrete Contin. Dyn. Syst. \textbf{23} (2009), no. 4, 1241--1252.

\bibitem{Bourgain}
J. Bourgain,
\href{https://doi.org/10.1155/S1073792897000305}{\emph{On the compactness of the support of solutions of dispersive equations}},
Internat. Math. Res. Notices \textbf{1997}, no. 9, 437--447.

\bibitem{CapistranoGomes}
R. de A. Capistrano-Filho and A. Gomes,
\href{https://doi.org/10.1051/cocv/2022085}{\emph{Global control aspects for long waves in nonlinear dispersive media}},
ESAIM Control Optim. Calc. Var. \textbf{29} (2023), Paper No. 7.

\bibitem{CapistranoKwakVielma}
R. de A. Capistrano-Filho, C. Kwak, and F. J. Vielma Leal,
\href{https://www.sciencedirect.com/science/article/pii/S1468121822001043}{\emph{On the control issues for higher-order nonlinear dispersive equations on the circle}},
Nonlinear Anal. Real World Appl. \textbf{68} (2022), Paper No. 103695.

\bibitem{daSilvaFreire}
P. L. da Silva and I. L. Freire,
\href{https://doi.org/10.1007/s00605-020-01453-0}{\emph{A geometrical demonstration for continuation of solutions of the generalised BBM equation}},
Monatsh. Math. \textbf{194} (2021), 495--502.

\bibitem{DavilaMenzala}
M. D\'avila and G. Perla Menzala,
\href{https://doi.org/10.1007/s000300050051}{\emph{Unique continuation for the Benjamin--Bona--Mahony and Boussinesq's equations}},
NoDEA Nonlinear Differential Equations Appl. \textbf{5} (1998), no. 3, 367--382.

\bibitem{DehmanGerardLebeau}
B. Dehman, P. G\'erard, and G. Lebeau,
\href{https://doi.org/10.1007/s00209-006-0005-3}{\emph{Stabilization and control for the nonlinear Schr\"odinger equation on a compact surface}},
Math. Z. \textbf{254} (2006), no. 4, 729--749.

\bibitem{HongPonce}
C. Hong and G. Ponce,
\href{https://doi.org/10.1016/j.jde.2024.02.021}{\emph{On special properties of solutions to the Benjamin--Bona--Mahony equation}},
J. Differential Equations \textbf{393} (2024), 321--342.

\bibitem{Kato}
T. Kato,
\emph{On the Cauchy problem for the (generalized) Korteweg--de Vries equation},
in: Studies in Applied Mathematics, Adv. Math. Suppl. Stud. \textbf{8}, Academic Press, New York, 1983, 93--128.

\bibitem{KPV02}
C. E. Kenig, G. Ponce, and L. Vega,
\href{https://doi.org/10.1016/S0294-1449(01)00073-7}{\emph{On the support of solutions to the generalized KdV equation}},
Ann. Inst. H. Poincar\'e Anal. Non Lin\'eaire \textbf{19} (2002), no. 2, 191--208.

\bibitem{KPV03}
C. E. Kenig, G. Ponce, and L. Vega,
\href{https://doi.org/10.4310/MRL.2003.v10.n6.a10}{\emph{On the unique continuation of solutions to the generalized KdV equation}},
Math. Res. Lett. \textbf{10} (2003), no. 6, 833--846.

\bibitem{LaurentLinaresRosier}
C. Laurent, F. Linares, and L. Rosier,
\href{https://doi.org/10.1007/s00205-015-0887-5}{\emph{Control and stabilization of the Benjamin--Ono equation in $L^2(\mathbb T)$}},
Arch. Ration. Mech. Anal. \textbf{218} (2015), no. 3, 1531--1575.

\bibitem{LaurentRosierZhang}
C. Laurent, L. Rosier, and B.-Y. Zhang,
\href{https://doi.org/10.1080/03605300903585336}{\emph{Control and stabilization of the Korteweg--de Vries equation on a periodic domain}},
Comm. Partial Differential Equations \textbf{35} (2010), no. 4, 707--744.

\bibitem{Mammeri}
Y. Mammeri,
\href{https://doi.org/10.57262/die/1356019780}{\emph{Unique continuation property for the KP--BBM-II equation}},
Differential Integral Equations \textbf{22} (2009), no. 3-4, 393--399.

\bibitem{Micu}
S. Micu,
\href{https://doi.org/10.1137/S0363012999362499}{\emph{On the controllability of the linearized Benjamin--Bona--Mahony equation}},
SIAM J. Control Optim. \textbf{39} (2001), no. 6, 1677--1696.

\bibitem{MORZ}
S. Micu, J. H. Ortega, L. Rosier, and B.-Y. Zhang,
\href{https://doi.org/10.3934/dcds.2009.24.273}{\emph{Control and stabilization of a family of Boussinesq systems}},
Discrete Contin. Dyn. Syst. \textbf{24} (2009), no. 2, 273--313.

\bibitem{MicuPazoto}
S. Micu and A. F. Pazoto,
\href{https://doi.org/10.1007/s11854-018-0074-3}{\emph{Stabilization of a Boussinesq system with localized damping}},
JAMA \textbf{137} (2019), 291--337.

\bibitem{MontesCordoba}
A. M. Montes and R. C\'ordoba,
\href{https://doi.org/10.1007/s00605-025-02115-9}{\emph{Exact controllability for the regularized Benjamin--Ono equation on a periodic domain}},
Monatsh. Math. \textbf{208} (2025), 347--368.

\bibitem{PazotoSotoVieira}
A. F. Pazoto and M. D. Soto Vieira,
\href{https://doi.org/10.1051/cocv/2023040}{\emph{Unique continuation and time decay for a higher-order water wave model}},
ESAIM Control Optim. Calc. Var. \textbf{29} (2023), Paper No. 49.

\bibitem{Peregrine}
D. H. Peregrine,
\href{https://doi.org/10.1017/S0022112066001678}{\emph{Calculations of the development of an undular bore}},
J. Fluid Mech. \textbf{25} (1966), 321--330.

\bibitem{Rosier2016}
L. Rosier,
\href{https://doi.org/10.4208/jms.v49n2.16.06}{\emph{On the Benjamin--Bona--Mahony equation with a localized damping}},
J. Math. Study \textbf{49} (2016), no. 2, 195--204.

\bibitem{RosierZhang}
L. Rosier and B.-Y. Zhang,
\href{https://doi.org/10.1016/j.jde.2012.08.014}{\emph{Unique continuation property and control for the Benjamin--Bona--Mahony equation on a periodic domain}},
J. Differential Equations \textbf{254} (2013), no. 1, 141--178.

\bibitem{Roumegoux}
D. Roum\'egoux,
\href{https://intlpress.com/api/bgcloud-front/resource/pdf/volume/1805802058919854082-1805802058919854082-ad56d8e10b8ef411fe67455f150577d5.pdf}{\emph{A symplectic non-squeezing theorem for BBM equation}},
Dyn. Partial Differ. Equ. \textbf{7} (2010), no. 4, 289--305.

\bibitem{RussellZhang}
D. L. Russell and B.-Y. Zhang,
\href{https://www.jstor.org/stable/2155248}{\emph{Exact controllability and stabilizability of the Korteweg--de Vries equation}},
Trans. Amer. Math. Soc. \textbf{348} (1996), no. 9, 3643--3672.

\bibitem{SautScheurer}
J.-C. Saut and B. Scheurer,
\href{https://doi.org/10.1016/0022-0396(87)90043-X}{\emph{Unique continuation for some evolution equations}},
J. Differential Equations \textbf{66} (1987), no. 1, 118--139.

\bibitem{SierraFonsecaPazoto}
O. A. Sierra Fonseca and A. F. Pazoto,
\href{https://doi.org/10.1007/s00033-022-01861-2}{\emph{Asymptotic behavior of a linear higher-order BBM-system with damping}},
Z. Angew. Math. Phys. \textbf{73} (2022), Paper No. 231.

\bibitem{Yamamoto}
M. Yamamoto,
\href{https://www.degruyterbrill.com/document/doi/10.1515/156939403770888264}{\emph{One unique continuation for a linearized Benjamin--Bona--Mahony equation}},
J. Inverse Ill-Posed Probl. \textbf{11} (2003), 537--543.

\bibitem{Zhang92}
B.-Y. Zhang,
\href{https://doi.org/10.1137/0523004}{\emph{Unique continuation for the Korteweg--de Vries equation}},
SIAM J. Math. Anal. \textbf{23} (1992), no. 1, 55--71.

\bibitem{ZhangZuazua}
X. Zhang and E. Zuazua,
\href{https://doi.org/10.1007/s00208-002-0391-8}{\emph{Unique continuation for the linearized Benjamin--Bona--Mahony equation with space-dependent potential}},
Math. Ann. \textbf{325} (2003), no. 3, 543--582.
\end{thebibliography}
\end{document}